\newcommand{\texorpdfstring}[2]{#1}   % dummy definition of \texorpdfstring
\newcommand{\url}[1]{#1} % dummy definition of \texorpdfstring
\definecolor{gray}{rgb}{0.2,0.2,.2}
\DeclareMathOperator{\sgn}{\mathrm{sgn}}
\definecolor{colorBlue}{rgb}{0.,.0,0.825}
\definecolor{colorGreen}{rgb}{0.,0.75,.0}
\definecolor{colorRed}{rgb}{0.99,0.,.0}
\newcommand{\BMHC}{}
\newcommand{\EMHC}{}
\DeclareMathOperator{\mhRe}{Re}
\newcommand{\iu}{\mathtt{i}}
\newcommand{\fspace}[1]{{\mathsf{#1}}}
\newcommand{\fspaceL}{\fspace{L}}
\newcommand{\Rset}{{\mathbb{R}}}
\newcommand{\Zset}{{\mathbb{Z}}}
\newcommand{\Cset}{{\mathbb{C}}}
\newcommand{\ocinterval}[2]{(#1,\,#2]}%
\newcommand{\cointerval}[2]{[#1,\,#2)}%
\newcommand{\oointerval}[2]{(#1,\,#2)}%
\newcommand{\ccinterval}[2]{[#1,\,#2]}%
\newcommand{\ini}{{\rm ini}}
\newlength{\mhpicDwidth}
\newlength{\mhpicDvsep}
\newlength{\mhpicDhsep}
\newlength{\mhpicPwidth}
\newlength{\mhpicPvsep}
\newlength{\mhpicPhsep}
\newlength{\mhpicWhsep}
\newcommand{\pair}[2]{{\left({#1},\,{#2}\right)}}
\newcommand{\at}[1]{{\left({#1}\right)}}
\newcommand{\bat}[1]{{\big(#1\big)}}
\newcommand{\Bat}[1]{{\Big(#1\Big)}}
\newcommand{\triple}[3]{{\left({#1},\,{#2},\,{#3}\right)}}
\newcommand{\ul}[1]{\underline{#1}}
\newcommand{\D}{\displaystyle}
\newcommand{\bigpar}{\par\quad\newline\noindent}
\newcommand{\abs}[1]{\left|{#1}\right|}
\newcommand{\dint}[1]{\,\mathrm{d}#1}
\newcommand{\Om}{{\Omega}}
\newcommand{\eps}{{\varepsilon}}
\newcommand{\ka}{{\kappa}}
\newcommand{\la}{{\lambda}}
\newcommand{\si}{{\sigma}}
\newcommand{\calD}{\mathcal{D}}
\newcommand{\calE}{\mathcal{E}}
\newcommand{\calF}{\mathcal{F}}
\theoremstyle{plain}
\newtheorem{theorem}             {Theorem}[]
\newtheorem{corollary}  [theorem]{Corollary}
\newtheorem{lemma}      [theorem]{Lemma}
\newtheorem*{result*}{Main result}
\theoremstyle{definition}
\newtheorem*{remarks*}{Remarks}
\newtheorem*{remark*}{Remark}
\begin{document}
%
%
% -----------------------------------------------------------------------------
% - Title information
% -----------------------------------------------------------------------------
%
%
\title{Instability of hysteretic phase interfaces \\ in a mean-field model with \BMHC inhomogeneities\EMHC}
\date{\today}
\author{%  
Michael Herrmann\footnote{Technische Universit\"at Braunschweig, Germany, {\tt michael.herrmann@tu-braunschweig.de}}\and
Barbara Niethammer\footnote{Universit\"at Bonn, Germany, {\tt niethammer@iam.uni-bonn.de}}
 }
\maketitle
%
%
% -----------------------------------------------------------------------------
% - Abstract
% -----------------------------------------------------------------------------
%
\begin{abstract}
We study a system of \BMHC non-identical \EMHC bistable particles that is driven by a dynamical constraint and coupled through a non-local mean-field. Assuming piecewise affine constitutive laws we prove the existence of traveling wave solutions and characterize their dynamical stability. Our findings explain the two dynamical regimes for phase interface that can be observed in numerical simulations with different parameters. We further discuss the convergence to a rate-independent model with strong hysteresis in the limit of vanishing relaxation time.
\end{abstract}
%
% 
% -----------------------------------------------------------------------------
% - MSC and keywords
% -----------------------------------------------------------------------------
%
\quad\newline\noindent% 
\begin{minipage}[t]{0.15\textwidth}%
Keywords:
\end{minipage}%
\begin{minipage}[t]{0.8\textwidth}%
\emph{hysteretic phase interfaces},  
\emph{instability of traveling waves},
\\%
\emph{rate-independent evolution of particle systems}
\end{minipage}%
\medskip
\newline\noindent
\begin{minipage}[t]{0.15\textwidth}%
MSC (2010): %
\end{minipage}%
\begin{minipage}[t]{0.8\textwidth}%
34C55,  	% Hysteresis for ordinary differential equations
35C07,  % Traveling wave solutions
70K50,   %	Bifurcations and instability for nonlinear problems in mechanics
74N30 % Problems involving hysteresis in solids
\end{minipage}%
%
%
% -----------------------------------------------------------------------------
% - Table of contents
% -----------------------------------------------------------------------------
%
\setcounter{tocdepth}{3}
\setcounter{secnumdepth}{3}{\scriptsize{\tableofcontents}}
%
%
%
% -------------------------------------------------------------------------------------
\section{Introduction}\label{sect:Intro}
% -------------------------------------------------------------------------------------
%
%
Phase interfaces and free boundary problems are ubiquitous in the sciences and appear naturally in systems that consist of a large number of bistable particles (or units) which are coupled by certain \mbox{interactions}. The underlying model is often a regularization of a diffusive PDE with \mbox{non-monotone} constitutive relation and the phase boundaries satisfy in the sharp interface limit the Stefan \mbox{condition} as well as an additional formula. In the simplest case, the latter represents the classical Maxwell \mbox{construction} and fixes the nonlinearity at the interface to a certain value. The Cahn-Hilliard \mbox{equation} is the most prominent example but there also exist more sophisticated models with \mbox{hysteresis} that distinguish between standing and moving interfaces and allow for both pinning and depinning \mbox{effects}. Important examples are the viscous approximation studied in \cite{NCP91,Plo94,EP04} and the related lattice model in \cite{HH13,HH18}. See also \cite{LM12} for a numerical discussion and \cite{DG17} for the thermodynamical aspects of the different interface rules.
\par
In this paper we study a simpler equation \BMHC which has been introduced in \cite{MT12} 
as a minimal particle model that exhibits non-trivial plastic behavior in the macroscopic limit with slow
loading, see section \ref{sect11} below for more details and \cite{PT00} for a precursory version. Each \EMHC particle is described by a fast scalar gradient ODE with respect to a double-well potential and the interactions are not of diffusive type but imposed by a non-local mean-field that stems from a slowly varying dynamical constraint. A further important ingredient is an inhomogeneity function which models \BMHC parametric fluctuations on the particle level \EMHC and affects the effective dynamics in the scaling limit of vanishing relaxation times. \BMHC The non-local model in our paper \EMHC also leads to hysteretic phase interfaces but their dynamics is not given by free boundary problems with bulk diffusion and Stefan condition. Instead we find a two dimensional but rate-independent limit equation that combines quasi-stationary approximation with a  flow rule for the interface position. Despite its simplicity the model exhibits a rather complex dynamical behaviour depending on the choice of the parameters. Our main result predicts the existence of two different dynamical regimes depending on whether travelling wave solutions with constant interface width are dynamically stable or not. 
%
%
% -------------------------------------------------------------------------------------
\subsection{Mean-field model}
\label{sect11}
% -------------------------------------------------------------------------------------
%
%
In this paper,  we study the dynamical equation
\begin{align}
\label{MicroDynamics}
\tau\, \partial_t{x}\pair{t}{p}=\si\at{t}+\theta\at{p}-H^\prime\bat{x\pair{t}{p}}
\end{align}
with continuous index variable $p\in\ccinterval{0}{1}$,  time $t\geq0$, and state variable $x\in\Rset$.  We further rely on the following constitutive assumptions:
\begin{enumerate}
\item 
$H^\prime$ is the bistable derivative of a double-well potential $H$ as depicted in Figure \ref{Fig:GenericExamples1}.

\item 
The \BMHC inhomogeneity \EMHC function $\theta$ is strictly increasing and satisfies $\int_0^1 \theta\at{p}\dint{p}=0$. 
\item 
The scalar function $\si$ is chosen such that solutions to \eqref{MicroDynamics} satisfy the dynamical side condition
\begin{align}
\label{MicroConstraint}
\int\limits_{0}^1 x\pair{t}{p}\dint{p}=\ell\at{t}
\end{align}
with prescribed function $\ell$.
\item 
The dynamical constraint $\ell$ is continuous,  at least piecewise differentiable,  and slow in the sense that  $\dot\ell \at{t}$ is of order $O\at{1}$ for all $t\geq0$.
\item 
The relaxation time $\tau$ is positive but small, i.e. $0<\tau\ll1$.
\end{enumerate}
\begin{figure}[ht!]%
\centering{%
\includegraphics[width=0.65\textwidth]{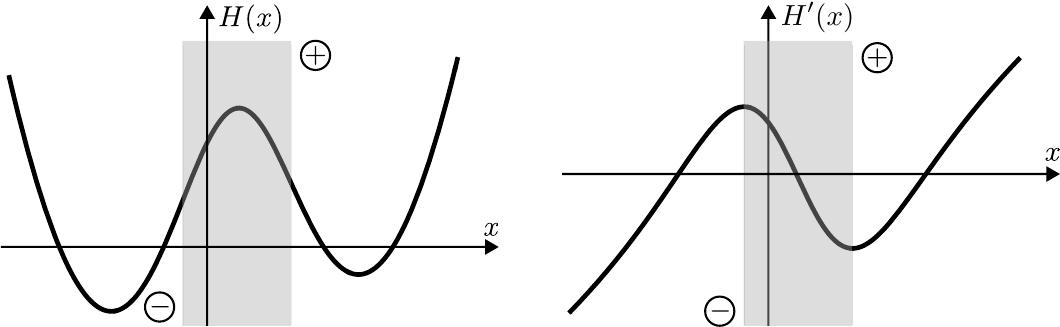}
}%
\caption{%
\emph{Left}.  Graph of a generic double-well potential $H$ which is concave inside the spinodal region (gray box).  The latter separates the two phases (indicated by the encircled signs) in which $H$ is strictly convex.  \emph{Right}.  The corresponding derivative $H^\prime$ is a bistable function as is it consists of two increasing and one decreasing branch.
}%   
\label{Fig:GenericExamples1}%
\end{figure}%
The first assumption is crucial for the existence of phase interfaces while the second one is a mere convention and can be ensured by \BMHC a suitable relabeling of the particles (see the discussion below) and adding constants to $\theta$ and $\si$. \EMHC Direct computations show that \eqref{MicroConstraint} is equivalent to the mean-field formula
\begin{align}
\label{MicroMultiplier}
\si\at{t}=\tau\,\dot\ell\at{t}+\int\limits_{0}^1 H^\prime\bat{x\pair{t}{p}}\dint{p}\,,
\end{align}
which implies that the right hand side in \eqref{MicroDynamics} depends in a non-local and non-autonomous way on the current state of the system and the instantaneous value of $\dot\ell\at{t}$. Finally, the fourth and the fifth assumption guarantee a clear separation between the two time scales in the problem.
\par
The dynamical system \eqref{MicroDynamics}+\eqref{MicroMultiplier} models an infinite ensemble of bistable particles which try to \mbox{minimize} their individual energies. The particles, however, are different thanks \BMHC to the \BMHC inhomogeneities \EMHC imposed by $\theta$ and also coupled by the non-local constraint \eqref{MicroConstraint}. The effective energy of particle $p$ is given by the tilted potential 
\begin{align*}
x\mapsto H\at{x}-\at{\theta\at{p}+\si\at{t}}\,x\,,
\end{align*}
which varies in time (hopefully slowly) and might exhibit either one or two wells depending on the values of $\theta\at{p}$ and $\si\at{t}$.  The energetic balance of the entire ensemble reads
\begin{align}
\label{EnergyBalance}
\tau\,\dot{\calE}\at{t}=\tau\,\si\at{t}\,\dot\ell\at{t}-\calD\at{t}\,,
\end{align}
where 
\begin{align*}
\calE\at{t}=\int\limits_0^1 H\bat{x\pair{t}{p}}\BMHC \dint{p}\EMHC -\int\limits_0^1 \theta\at{p}\,x\pair{t}{p}\BMHC \dint{p}\EMHC\quad\text{and}\quad
\calD\at{t}=\int\limits_0^1\Bat{\theta\at{p}+\si\at{t}-H^\prime\bat{x\pair{t}{p}}}^2\dint{p}
\end{align*}
quantify the total energy and the dissipation of the \BMHC system, respectively. The energy is not neccesarily \mbox{decreasing} due to the evolving side condition but its value at time $t$ depends neither on the \mbox{constraint} $\ell\at{t}$ nor the multiplier $\si\at{t}$. Alternatively one might state \eqref{EnergyBalance} as $\tau\,\dot\calF\at{t}=\tau\,\ell\at{t}\,\dot\si\at{t}-\calD\at{t}$, but the augmented energy $\calF\at{t}:=\calE\at{t}-\si\at{t}\,\ell\at{t}$ depends on $\ell\at{t}$ and in view of \eqref{MicroMultiplier} also on $\dot\ell\at{t}$.
\par
Notice also that equations \eqref{MicroDynamics}+\eqref{MicroMultiplier} are invariant under arbitrary rearrangements with respect to the index variable $p$ since the particles are not coupled by neighbor interactions but solely via the mean-field $\sigma$. The monotonicity of $\theta$ just means that the particles are ordered according to the strength of the inhomogeneity
and this is a convenient setting for the study of phase interfaces. However, in principle one can consider arbitrary functions $\theta$ and
interpret the inhomogeneities as random particle fluctuations that are independent of time (quenched disorder).
\bigpar
The analysis in this paper is restricted to the affine function
\begin{align}
\label{DefTheta}
\theta\at{p}=\delta\at{p-\tfrac12}
\end{align}
\BMHC with slope parameter $\delta>0$ \EMHC and the trilinear constitutive relation
\begin{align}
\label{DefNonl}
H^\prime\at{x}=\left\{\begin{array}{rl}
\!\!x+1&\!\!\text{for}\;\; x\in\ocinterval{-\infty} {-\ka}\,,
\smallskip\\%
\!\!\D-\frac{1-\ka}{\ka}\,x&\!\!\text{for}\;\; x\in\ccinterval{-\ka}{+\ka}\,,
\smallskip\\%
\!\!x-1&\!\!\text{for}\;\;x\in\cointerval{+\ka}{+\infty}\,,
\end{array}\right.
\end{align}
\BMHC which involves the spinodal parameter $0<\ka<1$. This function $H^\prime$ is illustrated in Figure \ref{Fig:Trilinear} and \EMHC corresponds to the piecewise quadratic double-well potential
\begin{align}
\notag%\label{DefPot}
H\at{x}=\frac12\left\{\begin{array}{rl}
\!\!\at{x+1}^2&\!\!\text{for}\;\; x\in\ocinterval{-\infty} {-\ka}\,,
\smallskip\\%
\!\!\D\frac{1-\ka}{\ka}\,\at{\ka-x^2}&\!\!\text{for}\;\; x\in\ccinterval{-\ka}{+\ka}\,,
\smallskip\\%
\!\!\at{x-1}^2&\!\!\text{for}\;\;x\in\cointerval{+\ka}{+\infty} \,.
\end{array}
\right.
\end{align}
\begin{figure}[ht!]%
\vspace{-\medskipamount}
\centering{%
\includegraphics[width=0.35\textwidth]{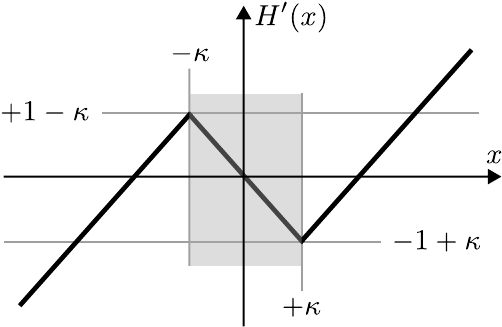}
}%
\caption{%
The trilinear function \eqref{DefNonl} with parameter $0<\ka<1$.  In the degenerate limit $\ka\to0$, the spinodal interval collapses to a single point,  so $H^\prime$ becomes bilinear and discontinuous. For $\ka\geq1$, there is no spinodal region since $H^{\prime\prime}$ is a non-negative function.
}%
\label{Fig:Trilinear}%
\end{figure}%
\BMHC In what follows we \EMHC argue that the effective dynamics for small $\tau$ depends crucially on $\delta$, the strength of the \BMHC inhomogeneities\EMHC. The key argument is the stability or instability of traveling wave solutions which we construct and investigate for the simplified model with \eqref{DefTheta}+\eqref{DefNonl} only. However, we expect that the key findings \BMHC apply to more general double-well potentials and other functions $\theta$ as well, see the discussion in section \ref{sect:existenceTW}. We further \EMHC emphasize that the limiting cases $\ka=0$ and $\ka=1$ are degenerated. For $\ka=1$, equation \eqref{DefNonl} implies the monotonicity of $H^\prime$  and phase interfaces do  hence not exist. For $\ka=0$, however, the width of any phase interface vanishes and cannot alter in time.
\bigpar
Closely related to our work is \cite{MT12}, which studies a finite-dimensional analogue to our \mbox{dynamical} mean-field model consisting of $N$ particles. In that paper, \BMHC the index variable $p$ attains the discrete values $p_k=\eps\,k$ with $k=1,...,N$ and $\eps=1/N$, so the formulas for $\ell$ and $\si$ involve sums instead of integrals. Each particle is interpreted as an \mbox{overdamped} snap-spring system whose elastic energy is given by the combination of $H$ and a parameter $\theta_k$, which
is considered as a static random variable and models quenched disorder. Moreover, the dynamical constraint is related to a time-dependent Dirichlet condition in a hard loading device. \EMHC The analytical perspective, however, differs from our setting. In \cite{MT12}, the authors do not study the propagation of phase interface explicitly but concentrate on the variational aspects of the macroscopic \mbox{continuum} limit and prove that the finite-dimensional gradient flow $\Gamma$-converges to a rate-independent ERIS model provided that first $\tau$ and afterwards $\eps$ tends to 0. The rigorous results in \cite{MT12} allow for \BMHC more general functions \EMHC $\theta$ but assume a bilinear constitutive relation, i.e. $H^\prime$ is given by \eqref{DefNonl} with $\ka=0$. The spatial or temporal width of phase interfaces can hence not oscillate for $\tau>0$ but the rate-independent limit model is more general than the equations below.  
\par
The \mbox{wiggly} \mbox{energy} landscape of snap-spring systems as well as the expected hysteresis in the \mbox{quasi-stationary} regime are also studied in \cite{FZ92,PT00,PT02,PT05} and \cite{Mie11b,MRS12} present a \mbox{general} framework for the derivation of rate-independent limit models from microscopic \mbox{gradient} flows. See also \cite{BS96,Mie05,Mie11a,MR15} for more details \mbox{concerning} the underlying \mbox{mathematical} \mbox{theory} and the admissible solution concepts for rate-independent evolution. Moreover, piecewise \mbox{linear} \mbox{constitutive} relations have also been used \BMHC in \cite{TV05,TV08,ET10,TV10a,TV10b,VK12,HSZ12,GTV22} to characterize phase transition waves and hysteretic phenomena in Hamiltonian particle systems or related latttice models.\EMHC
\par
The non-local coupling of bistable particles can also explain the hysteretic phenomena in \mbox{Lithium-ion} batteries. For instance, the model in \cite{DGH11} describes the entropic effects by white noise instead of \BMHC parametric inhomogeneities \EMHC and can hence be regarded as a family of identical stochastic ODEs which is driven by a dynamical constraint similar to \eqref{MicroConstraint}. The effective  behavior is governed by a \mbox{non-local} variant of the Fokker-Planck equation for the probability distribution and has been \mbox{studied} in \cite{HNV12,HNV14} for different scaling relations between the relaxation time and the strength of the noise.
\par
\BMHC We finally emphasize that the finite- and infinite-dimensional versions of the mean-field behave similarly but a continuous index variable  is more convenient for analytical purposes since a phase interface corresponds to a smooth curve and does not propagate by small jumps in $p$-direction. Our numerical simulations, however, are computed with $N=1000$ particles, where we used an explicit Euler step for the straight forward time discretization. \EMHC
%
%-------------------------------------------------------------------------------------------
\subsection{\texorpdfstring{Formation and propagation of phase interfaces for $\tau>0$ and $\tau=0$}{formation and propagation of phase interfaces }}
%-------------------------------------------------------------------------------------------
%
\BMHC 
The formulas \eqref{DefTheta}+\eqref{DefNonl} as well as our assumptions on the constraint $\ell$ imply --- for any admissible choice of the parameters $\tau$, $\delta$, and $\kappa$ --- that the initial value problem to the dynamical system \eqref{MicroDynamics}+\eqref{MicroMultiplier} is globaly well-posed since the right hand side is Lipschitz continuous with respect to the state variable $x\in\fspaceL^\infty\at{\ccinterval{0}{1}}$.\EMHC
\par
Using standard arguments for parametrized ODEs we easily show that the set of all states being continuous and stricly increasing with respect to $p$ is invariant under the dynamics and numerical simulations show that even the solutions with random initial data belong to this class after a certain transient time $t_*$.  In other words, equations \eqref{MicroDynamics}+\eqref{MicroMultiplier} have a strong tendency to produce single-interface solutions as sketched in Figure~\ref{Fig:GenericExamples2}, where $\xi_-\at{t}$ and $\xi_+\at{t}$ denote the values of $p$ at which the solution enters and leaves the spinodal region, respectively. In particular, for $\tau>0$ and all times $t\geq t_*\geq 0$ we have 
\begin{align}
\label{DefPhases}
\begin{split}
x\pair{t}{p}\in\left\{\begin{array}{rcl}
\ocinterval{-\infty}{-\ka}&\text{for}& p\in\ccinterval{0}{\xi_-\at{t}}
\smallskip\\%
\ccinterval{-\ka}{+\ka}&\text{for}&p\in\ccinterval{\xi_-\at{t}}{\xi_+\at{t}}
\smallskip\\%
\cointerval{+\ka}{+\infty}&\text{for}&p\in\ccinterval{\xi_+\at{t}}{1}
\end{array}
\right. 
\end{split}
\end{align}
\BMHC with $0\leq\xi_-\at{t}\leq\xi_+\at{t}\leq1$.\EMHC
\begin{figure}[ht!]%
\centering{%
\includegraphics[width=0.35\textwidth]{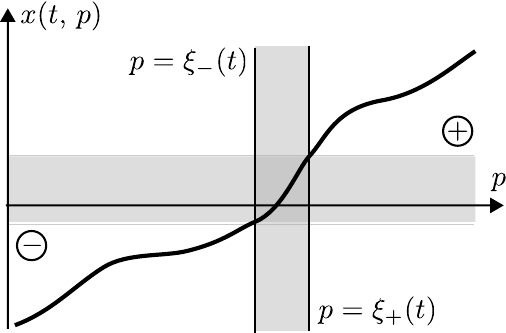}
}%
\caption{%
Cartoon of a continuous monotone state with two interface positions $\xi_-\at{t}$ and $\xi_+\at{t}$. }%
\label{Fig:GenericExamples2}%
\end{figure}%
\par\noindent
The key features of the particle dynamics can be summarized as follows. As long as $x\pair{t}{p}$ belongs to one of the stable regions, the local dynamics is governed by either one of the stable ODEs 
\begin{align*}
\tau\,\partial_t x\pair{t}{x} = \si\at{t}+\delta\,\at{p-\tfrac12}-1-x\pair{t}{p}
\quad\text{as long as}\quad x\pair{t}{p}<-\kappa
\end{align*}
or
\begin{align*}
\tau\,\partial_t x\pair{t}{x} = \si\at{t}+\delta\,\at{p-\tfrac12}+1-x\pair{t}{p}
\quad\text{as long as}\quad x\pair{t}{p}>+\kappa\,,
\end{align*}
which describe that $x\pair{t}{p}$ wants to approach $\si\at{t}+\delta\,\at{p-\tfrac12}\mp1$ rather quickly provided that the mean-field $\si\at{t}$ changes slowly. Inside the spinodal region, however, particle $p$ evolves according to the unstable ODE
\begin{align*}
\tau\,\partial_t x\pair{t}{x} = \si\at{t}+\delta\,\at{p-\tfrac12}+\frac{1}{\ka}x\pair{t}{p}
\quad\text{as long as}\quad -\ka<x\pair{t}{p}<+\kappa
\end{align*}
so we expect that the particle leaves the spinodal region after a short period of time with large exit velocity. The interplay between the stable and the unstable ODEs ensure that there exist basically the following three different modes for the dynamics of the phase interface.
\begin{enumerate}
\item 
$\dot\xi_-\at{t}>0$ and $\dot{\xi}_+\at{t}<0$\,: The width of the interface shrinks since particles leave the spinodal region on both sides.
\item 
$\dot\xi_-\at{t}<0$ and $\dot{\xi}_+\at{t}<0$\,: The phase interface moves to the left (i.e., into the negative phase $x<-\ka$) since the particles enter$\big|$leave the spinodal region on the left$\big|$right hand side. A~particle in front of the interface enters the spinodal region slowly, gets strongly accelerated during its short spinodal visit, and relaxes quickly to a slow motion in the back of the interface. 
\item 
$\dot\xi_-\at{t}>0$ and $\dot{\xi}_+\at{t}>0$\,: The  interface propagates into the phase $x>+\kappa$, i.e. to the right.
\end{enumerate}
In summary, the non-local dynamical system \eqref{MicroDynamics}+\eqref{MicroMultiplier}+\eqref{DefTheta}+\eqref{DefNonl} allows for both standing and moving phase interfaces. Each interface has a small but positive width and a moving one also exhibits a tail in its back which stems from the quick deceleration of fast particles leaving the spinodal region. See also the numerical simulations in Figures \ref{Fig:TWForm1} and \ref{Fig:TWForm2} below. 
\par%
On a formal level we expect that the limit dynamics for $\tau=0$ can be described by slowly varying quasi-stationary states which do not penetrate the spinodal region but exhibit a jump discontinuity which is located at $x=\xi\at{t}=\xi_\pm\at{t}$ and represents a phase interface of vanishing width. More precisely, for $\tau=0$ the state of the particle system is at any time $t\geq0$ given by
\begin{align}
\label{LimStates}
x\pair{t}{p}=\si\at{t}+\delta\,\at{p-\tfrac12}+\left\{
\begin{array}{ccc}
-1&\text{for}&0\leq p<\xi\at{t}\,,
\smallskip\\%
+1&\text{for}&\xi\at{t}<p\leq 1\,,
\end{array}
\right.
\end{align}
and this implies 
\begin{align}
\label{LimEqnOfState}
\si\at{t}+1-2\,\xi\at{t}=\ell\at{t}
\end{align}
as equation of state between $\si$, $\xi$, and $\ell$. However, this algebraic equation between the input $\ell$ and the two output quantities $\si$ and $\xi$ must be accompanied by a dynamical flow rule that relates interface motion to the instantaneous quasi-stationary state and the dynamical multiplier. In view of the heuristic arguments from above concerning the slow and the fast processes for $\tau>0$ and motivated by numerical simulations (see for instance Figures \ref{Fig:PerFor1} and \ref{Fig:PerFor2} below) we expect that left and right moving interfaces comply with
\begin{align}
\label{LimFlowRule}
\begin{split}
\dot\xi\at{t}>0\quad\implies\quad \si\at{t}+\delta\,\at{\xi\at{t}-\tfrac12}=+1-\ka
\,,\\%
\dot\xi\at{t}<0\quad\implies\quad \si\at{t}+\delta\,\at{\xi\at{t}-\tfrac12}=-1+\ka\,,
\end{split}
\end{align}
\BMHC while standing interfaces for $\tau=0$ are characterized by \EMHC 
\begin{align*}
\BMHC\si\at{t}+\delta\,\at{\xi\at{t}-\tfrac12}\in\oointerval{-1+\ka}{+1-\ka}\quad \implies 
\quad \dot\xi\at{t}=0\,,\quad \dot\si\at{t}=\dot\ell\at{t}\,.\EMHC 
\end{align*}
\begin{figure}[ht!]%
\centering{%
\includegraphics[width=0.95\textwidth]{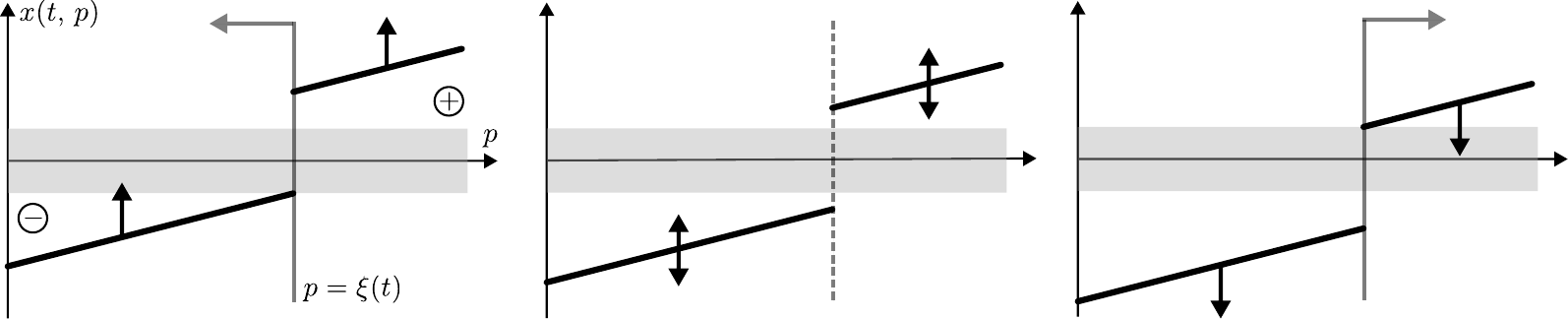}
}%
\caption{%
Piecewise affine states with slope $\delta$ for left-moving,  standing,  and right-moving interfaces as predicted by the limit model via \eqref{LimStates}. The jump at the phase interface has always height $2$ while the left- and right-sided limits comply with the hysteretic flow rule \eqref{LimFlowRule}.}%
\label{Fig:LimitStates}%
%%\end{figure}%
%%%
%%%
%%\begin{figure}[ht!]%
\medskip
\centering{%
\includegraphics[width=0.95\textwidth]{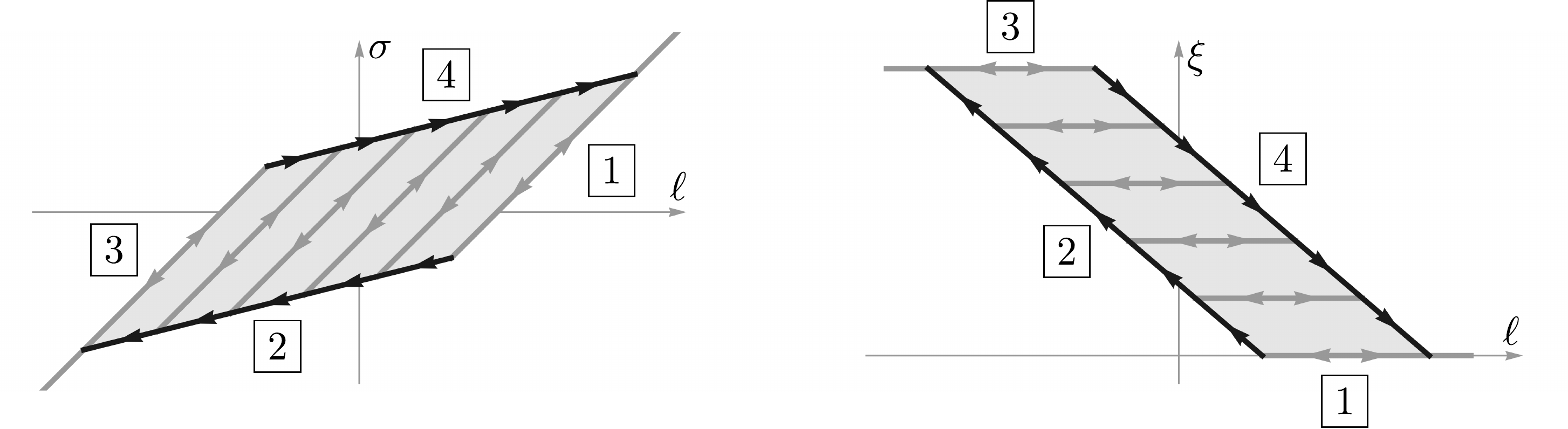}
}%
\caption{%
Cartoon of the rate-independent limit dynamics in two different planes, shown for $\delta=\tfrac23$ and $\ka=\tfrac13$.  Standing and moving interfaces are represented by gray and black lines, respectively, and the boundary of the hysteresis loops is described by the affine equations in \eqref{HysteresisLoops}. Given $\ell$, the evolution of $\si$ and $\xi$ can be deduced by following the admissible directions in the diagrams.
}%
\label{Fig:LimitDynamics}%
\end{figure}%
In particular, the phase interface can only propagate to the left$\big|$right, if the left-sided$\big|$right-sided limit of the quasi-stationary state \eqref{LimStates} equals ${-}\ka\big|{+}\ka$, so particles in front of the moving interface enter the spinodal region from below$\big|$above. Moreover,
 the interface speed as well as the temporal change of the mean-field are completely determined by the dynamical constraint since the combination of \eqref{LimEqnOfState}+\eqref{LimFlowRule} ensures
\begin{align*}
\dot\xi\at{t}\neq0\quad\implies\quad \dot\xi\at{t}=-\frac{1}{2+\delta}\,\dot\ell\at{t}\,,\quad \dot\si\at{t}=+\frac{\delta}{2+\delta}\,\dot\ell\at{t}\,.
\end{align*}
In conclusion, the candidate for the limit dynamics is a two-dimensional and rate-independent \mbox{dynamical} systems for $\si$ and $\xi$ that is driven by the \mbox{non-local} constraint $\ell$ and exhibits \mbox{hysteresis} phenomena, where the parameters $\ka$ and $\delta$  appear in the flow rule for the interface \eqref{LimFlowRule} but not in the equation of state \eqref{LimEqnOfState}. Figure \ref{Fig:LimitStates} illustrates the consistent quasi-stationary states while the effective \mbox{evolution} is \mbox{depicted} in Figure \eqref{Fig:LimitDynamics}. The boundary of the admissible set can be computed by means of  \eqref{LimStates}+\eqref{LimEqnOfState}+\eqref{LimFlowRule} and consists of the following four parts:
\begin{align}
\label{HysteresisLoops}
\begin{array}{cclcl}
\text{\emph{part}\;}&&\text{\emph{type of interface}}&&\text{\emph{relations between $\si$, $\xi$, and $\ell$}}
\smallskip\\\hline\vspace{-2\smallskipamount}\\
\boxed{1}&&\text{standing}&&\sigma=\ell-1\,,\;\;\xi=0
\smallskip\\%
\boxed{2}&&\text{right moving}&&\at{1+\tfrac12\,\delta}\,\sigma+\tfrac12\,\delta\,\ell=-1+\ka=\at{2+\delta}\,\at{\xi-\tfrac12}+\ell
\smallskip\\%
\boxed{3}&&\text{standing}&&\sigma=\ell+1\,,\;\; \xi=1
\smallskip\\%
\boxed{4}&&\text{left moving}&&\at{1+\tfrac12\,\delta}\,\sigma+\tfrac12\,\delta\,\ell=+1-\ka=\at{2+\delta}\,\at{\xi-\tfrac12}+\ell
\end{array}
\end{align}
%\vspace{-2\medskipamount}%
%
%
%
%
\BMHC We finally mention that hysteretic interface rules for piecewise continuous functions appear in other models as well. For instance, \cite{Peg87} considers the viscous regularization of an ill-posed wave \mbox{equation} with bilinear consitutive \mbox{relation}, \mbox{derives} admissible jump conditions in the limit of \mbox{vanishing} \mbox{viscosity}, and studies the implications for slow load-deformation experiments.  \EMHC
%
%
% -------------------------------------------------------------------------------------
\subsection{Main result and its interpretation}
%\label{sect13}
% -------------------------------------------------------------------------------------
%
From a mathematical point of view, it is quite natural to ask whether the solutions to the particle system converge as $\tau\to0$ to a trajectory of the limit model and how a rigorous proof could be accomplished. A first promising ingredient is the energy balance \eqref{EnergyBalance} because it ensures that the dissipation is small at most of the times and one can show that this implies that the interface width $\xi_+\at{t}-\xi_-\at{t}$ is sufficiently small as well as the asymptotic validity of the equation of state \eqref{LimEqnOfState}. To complete the proof one would like to establish uniform regularity estimates for $\dot{\si}$ and $\dot{\xi}_\pm$ because this would guarantee that the hysteretic flow rule  \eqref{LimFlowRule} is satisfied in the limit. 
\par
However, numerical simulations indicate that both the mean-field and the interface position do not always evolve very regularly. It might happen that the particle model does in fact evolve as predicted by the formal limit equations, but it is also possible that the macroscopic quantities and their derivatives exhibit strong temporal oscillations. The first and the second dynamical regime is illustrated in Figure \ref{Fig:PerFor1} and \ref{Fig:PerFor2}, respectively, where we always used the same function $\ell$ and compare simulations with different choices of $\delta$. In particular, the oscillations in the second simulation are neither caused by the dynamical constraint nor by the choice of $\ka$ but reflect an internal oscillatory mode which is not present in the first example. 
\begin{figure}[ht!]%
\centering{%
\includegraphics[width=0.95\textwidth]{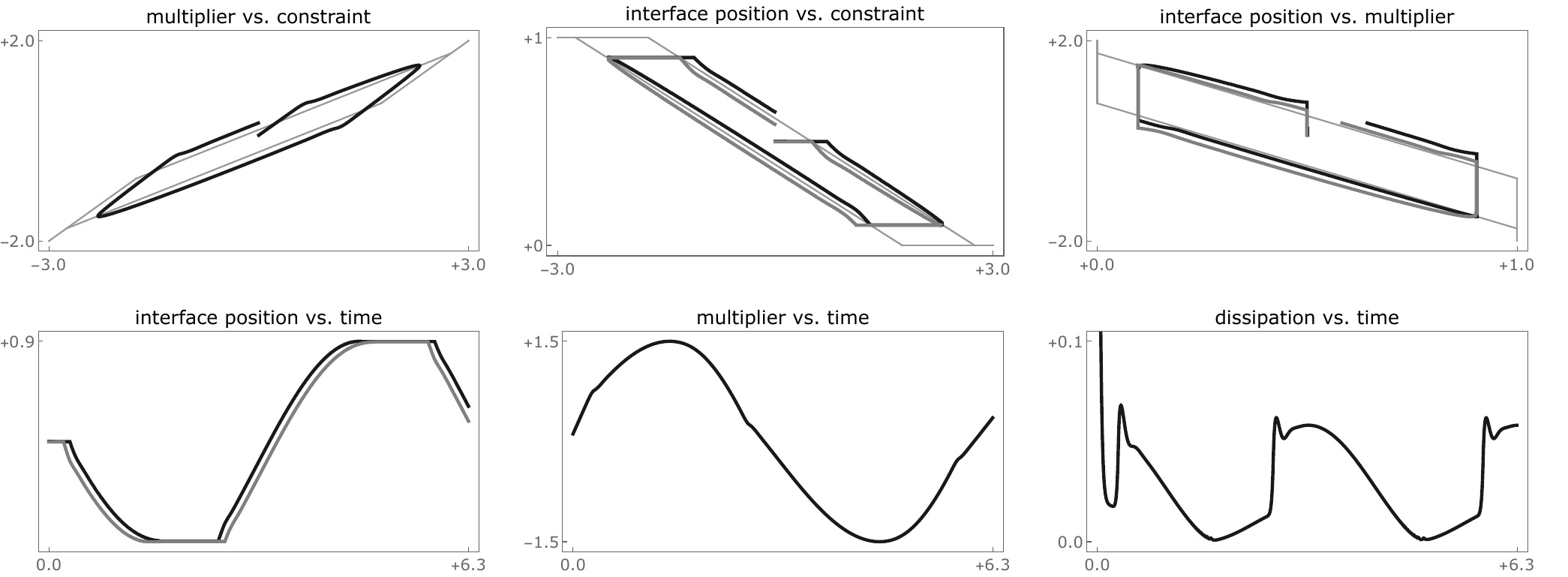}%
}%
\caption{%
Numerical simulation with initial data $x_{\ini}\at{p}=\sgn\at{p-0.5}$, parameters $\delta=2.5$, $\ka=0.5$, $\tau=0.05$, and periodic forcing  $\ell\at{t}=\sin\at{t}$ on the time interval $\ccinterval{0}{2\,\pi}$.  \emph{Top}. Projection of the dynamical trajectory to three different planes, where the thin lines in the background represent the limit model as shown in Figure \ref{Fig:LimitDynamics}. In the last two columns, the interface position is defined by $\xi_-$ (gray) or $\xi_+$ (black). \emph{Bottom}.  Evolution of $\xi_-$, $\xi_+$, $\si$, and $\calD$. \emph{Interpretation}. The particle systems follows for small $\tau$ the limit model since left and right moving traveling wave solutions are dynamically stable. }%
\label{Fig:PerFor1}%
%\end{figure}%
%
%
\bigskip
%
%\begin{figure}[ht!]%
\centering{%
\includegraphics[width=0.95\textwidth]{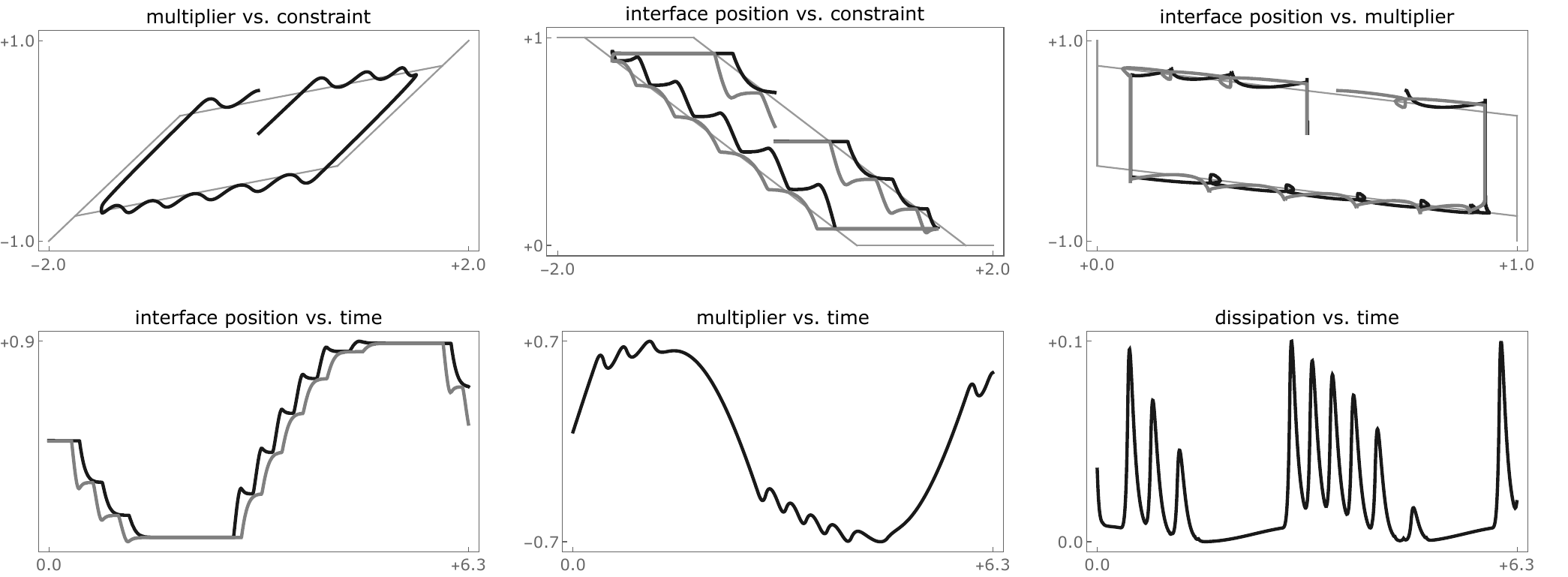}%
}%
\caption{%
Recomputation of the solution from Figure \ref{Fig:PerFor1} with modified parameter $\delta=0.5$. \emph{Interpretation}. The strong temporal oscillations reflect the loss of stability for traveling waves and the deviations from the predictions of the limit model are rather strong.
}%
\label{Fig:PerFor2}%
\end{figure}%
\par
In this paper we offer the following explanation for the existence of the two different regimes.
\begin{result*}
The dynamical system \eqref{MicroDynamics}$+$\eqref{MicroMultiplier}$+$\eqref{DefTheta}$+$\eqref{DefNonl} admits for any $\tau>0$ traveling wave solutions provided that the dynamical constraint changes almost linearly. These waves propagate with constant interface speed but can be stable or unstable depending on the parameters $\tau$, $\kappa$, and $\delta$. In particular, they are unstable for $\delta<2$ and arbitrary $0<\ka<1$ if $\tau$ is sufficiently small.
\end{result*}
In fact, it seems that the numerical data in the non-oscillatory regime can be approximated by a slowly modulated traveling wave while this is not the case in the second regime, where the width of the interface oscillates as well. We are not able to prove this approximation result rigorously but mention that the slow modulation concerns both the interface width and the wave speed which slowly adjust to the unique values that are compatible with the current change of the dynamical constraint. See also Figures \ref{Fig:TWForm1} and \ref{Fig:TWForm2} below for related simulations with linearly increasing constraint and notice that the different dynamical behaviour concerns times at  which the limit model predicts a moving phase interface. Otherwise the width of the interface is always negligible and the corresponding state of the particle system is very close to the quasi-stationary  approximation \eqref{LimStates}.
\par
We further emphasize that the instability of traveling waves does not imply the \mbox{invalidity} of the rate-independent limit model since the amplitudes in the oscillations of $\si$ and $\xi_\pm$ seem to disappear in the limit $\tau\to0$. It only indicates that the rigorous passage to the limit $\tau\to0$ is rather delicate in the case of small \BMHC inhomogeneities \EMHC and non-vanishing spinodal region. More precisely, for $\delta>2$ we expect that the mean-field $\si$, the interface positions $\xi_\pm$, and their derivatives change slowly and converge for $\tau\to0$ to pointwise limits that satisfy the equation of state \eqref{LimEqnOfState} as well as the hysteretic flow rule \eqref{LimFlowRule} for all times $t\geq0$. In particular, changes in the propagation mode of the interface only happen when predicted by the limit model and are solely enforced by the dynamical constraint. For $0<\delta<2$, however, we observe rapid sign changes in the derivatives of $\xi_\pm$ and $\si$ because the width of the interface can constantly switch between expansion and contraction. \BMHC These fluctuations complicate the mathematical analysis and must be taken into account in any convergence proof for $\tau\to0$. For small $\delta$, one has to work with weak instead of strong convergence and must control the evolution of the phase interface more carefully. \EMHC 
\par
It would be desirable to identify the next order corrections and to derive a more sophisticated model for the small-parameter dynamics of \BMHC \eqref{MicroDynamics}$+$\eqref{MicroMultiplier} that accounts for a breathing interfaces but this lies beyond the scope of this paper. It has already been pointed out in \cite{MT12} that the \mbox{temporal} \mbox{oscillations} in mean-field models can be linked to the Neishtadt phenomenon (see for \mbox{instance} \cite{Nei09}), so the \mbox{theory} of \mbox{dynamical} bifurcations might provide asymptotic formulas for the expected \mbox{amplitudes} and \mbox{frequencies} of the deviations from the limit model. \EMHC
\par
\BMHC We finally remind \EMHC that the limiting case $\ka=0$ is degenerate. The right hand side of \eqref{MicroDynamics} is not continuous, so the corresponding initial value problem is ill-posed and admits multiple solutions. There still exist traveling wave solutions as given below but their stability can not be related to a linearized equation and its spectral properties. In particular, our results do not apply to the system with bilinear constitutive law and one might even argue that this should behave much nicer since the width of the phase interfaces cannot oscillate at all.
\bigpar
The paper is organized as follows. We first characterize traveling waves by combining piecewise ODE arguments with natural matching conditions. This gives rise to the explicit formulas in \mbox{Theorem} \ref{Thm:TW} and yields with \eqref{Lem:TW:Eqn4}+\eqref{TWConstraint} consistent formulas for the interface width and the \mbox{dynamical} constraint. Afterwards we study the linearized dynamical equation in the comoving frame as well as the related eigenvalue problem \eqref{LinStab.EP}. The latter consists of a family of linear ODEs with piecewise constant coefficients which are still coupled by a \mbox{mean-field}. In Theorem \ref{Thm:Spectrum} we relate the entire spectrum to the zeros of certain \mbox{transcendental} \mbox{functions} which depend on $\tau$, $\ka$ and $\delta$. For fixed $\tau>0$, the \mbox{implied} stability properties can only be studied numerically by plotting the zeros in the complex plane but there exist simplified  expressions for the limit $\tau\to0$ as derived in Corollary~\ref{Cor:Instab}.
%
%
%-------------------------------------------------------------------------------------------
\section{Traveling waves and their stability}
%-------------------------------------------------------------------------------------------
%
%-------------------------------------------------------------------------------------------
\subsection{Numerical examples of interface propagation} %\label{sect:NumTW}
%-------------------------------------------------------------------------------------------
%
%
\begin{figure}[ht!]%
\centering{%
\includegraphics[width=0.95\textwidth]{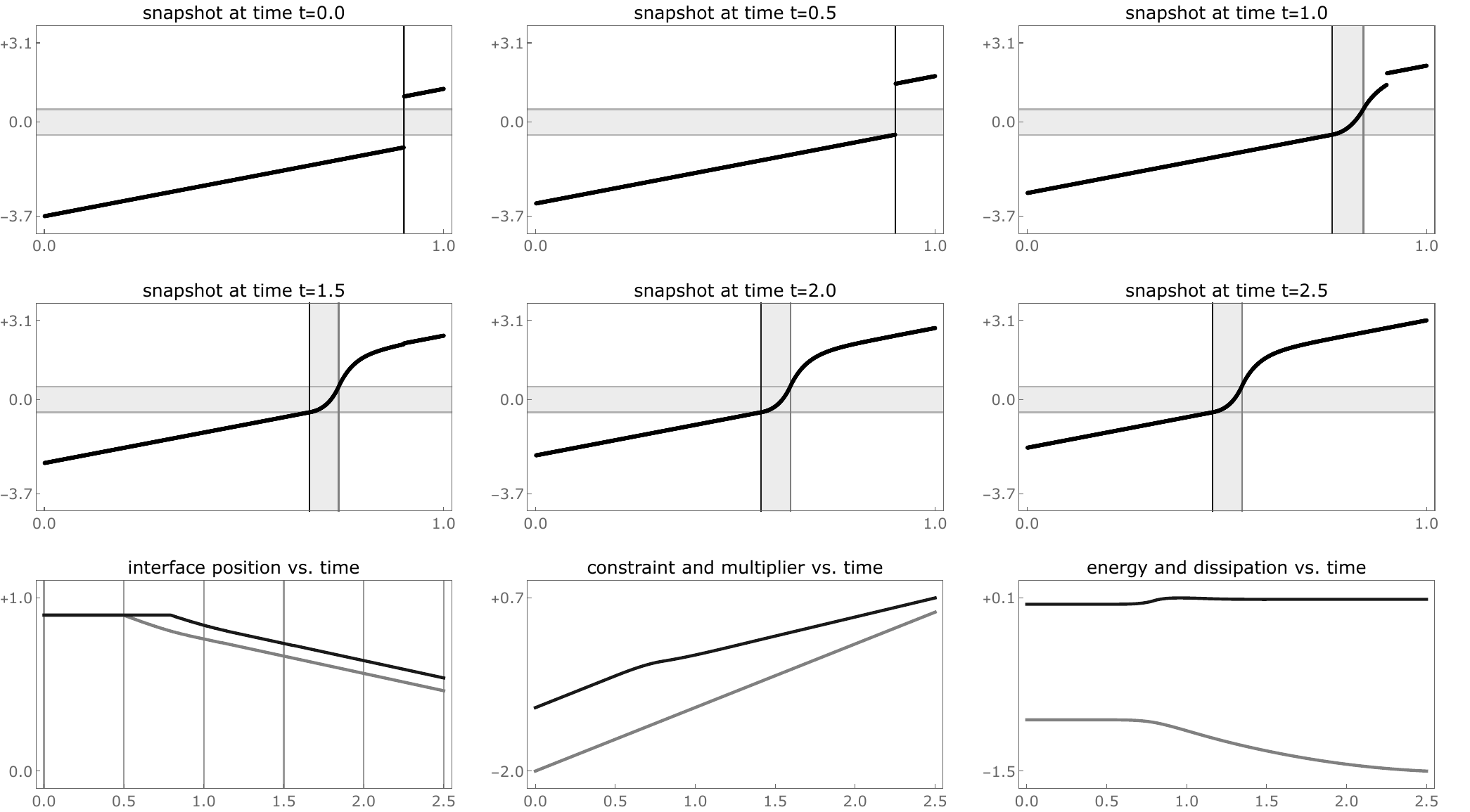}%
}%
\caption{%
Numerical solution in the simplified setting --- see  \eqref{NormalizedConstraint} and \eqref{WellPreparedData}) --- with $\ka=0.5$, $\delta=3.0$, and $\tau=0.2$. The first two rows show the graph of $x\pair{t}{\cdot}$ at six equidistant times, while the third row illustrates the evolution of $\xi_-\at{t}$, $\ell\at{t}$, $\calE\at{t}$ (gray) as well as $\xi_+\at{t}$, $\si\at{t}$, $\calD\at{t}$ (black). 
}%
\label{Fig:TWForm1}%
%\end{figure}%
%%
\bigskip
%%
%\begin{figure}[ht!]%
\centering{%
\includegraphics[width=0.95\textwidth]{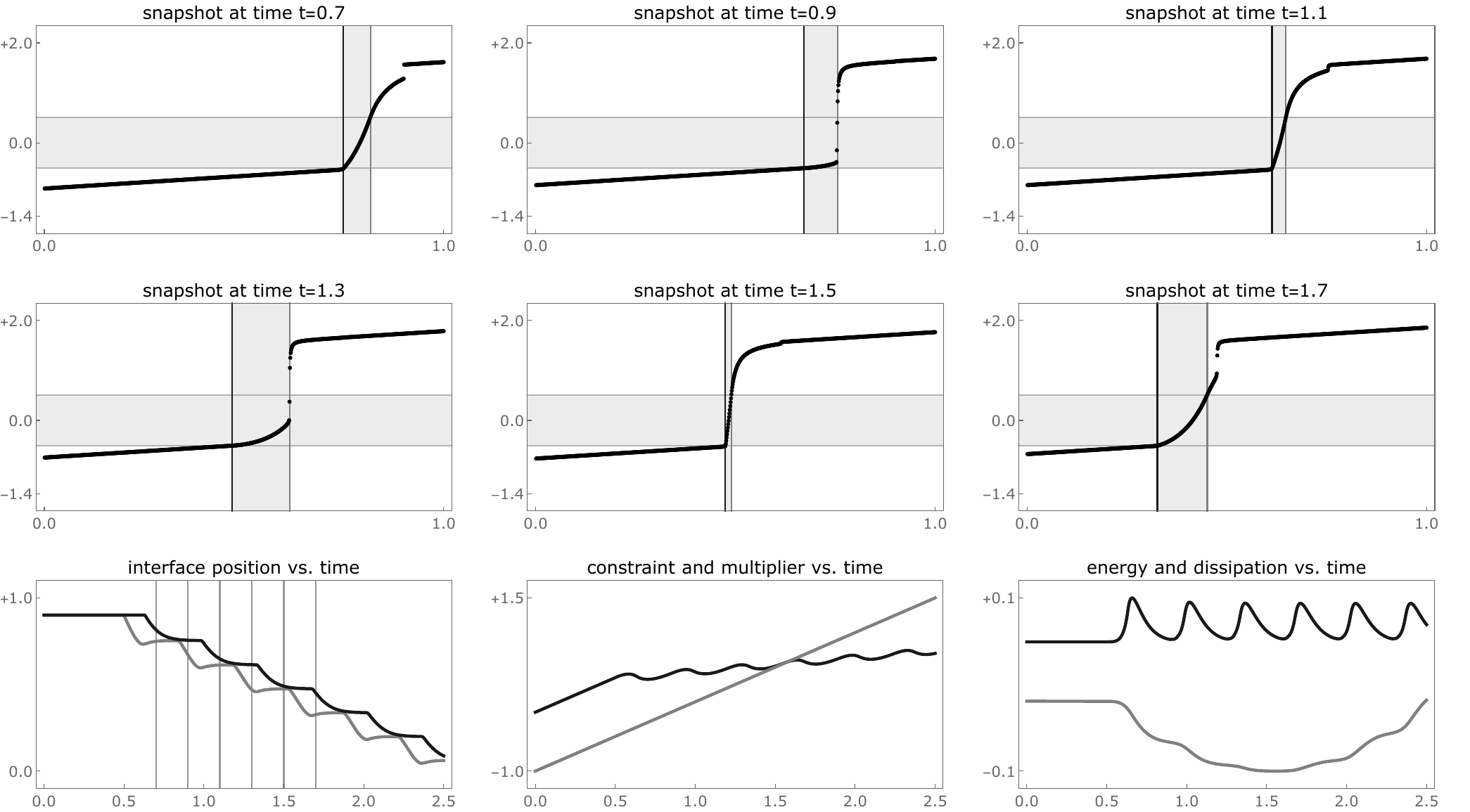}
}%
\caption{%
The simplified setting with $\ka=0.5$, $\delta=0.5$,  and $\tau=0.05$.  The interface width now exhibits rather strong temporal oscillations. See also Figure \ref{Fig:TWForm3} for another choice of $\tau$.
}
\label{Fig:TWForm2}%
\end{figure}%
\noindent
We already mentioned in \S\ref{sect:Intro} that moving phase interfaces can behave rather differently depending on whether a constant interface width is stable or not. To elucidate the key dynamical features we study a simplified setting with
\begin{enumerate}
\item 
the constantly increasing dynamical constraint 
\begin{align}
\label{NormalizedConstraint}
\dot\ell\at{t}=+1
\end{align}
\item 
and well-prepared initial data
\begin{align}
\label{WellPreparedData}
x_\ini\at{p} = \delta\,\bat{p-\xi_\ini} + \sgn\bat{p-
\xi_\ini}\,.
\end{align} 
\end{enumerate}
The special choice \eqref{WellPreparedData} implies for all sufficiently small times that no particle can penetrate the \mbox{spinodal} region and that the constant slope of $x$ persists. More precisely, by direct \mbox{computations} we verify for all times $0\leq t\leq \at{1-\ka}$ that the unique solution to the non-local dynamics \eqref{MicroDynamics}+\eqref{MicroMultiplier}+\eqref{DefTheta}+\eqref{DefNonl} is given by the formulas
\begin{align*}
x\pair{t}{p}=x_\ini\at{p}+t\,,\qquad 
\si\at{t}=\tau+t+\tfrac12\,\delta-\delta\,\xi_\ini\,,\qquad \xi_-\at{t}=\xi_+\at{t}=\xi_\ini
\end{align*}
and
\begin{align*}
\sgn_\ka\bat{x\pair{t}{p}}=\sgn\bat{p-\xi_\ini}\,,
\end{align*}
which describe a linear temporal grow of the state in presence of a standing interface. At time $t=1-\ka$, however,  the particle at $p=\xi_\ini$ penetrates the spinodal region from below and the phase interface starts moving to the left. Figure \ref{Fig:TWForm1} illustrates the first regime. After a transient time, the dynamical solution resembles a left moving traveling wave that propagates with constant speed and exhibits a constant interface width. See also Theorem~\ref{Thm:TW} and the left panel in Figure \ref{Fig:TW}. For other parameters, however, the width of the phase interface oscillates rather strongly and the solution cannot be approximated by a traveling wave. This is exemplified in Figures \ref{Fig:TWForm2} and \ref{Fig:TWForm3}.
\begin{figure}[ht!]%
\centering{%
\includegraphics[width=0.95\textwidth]{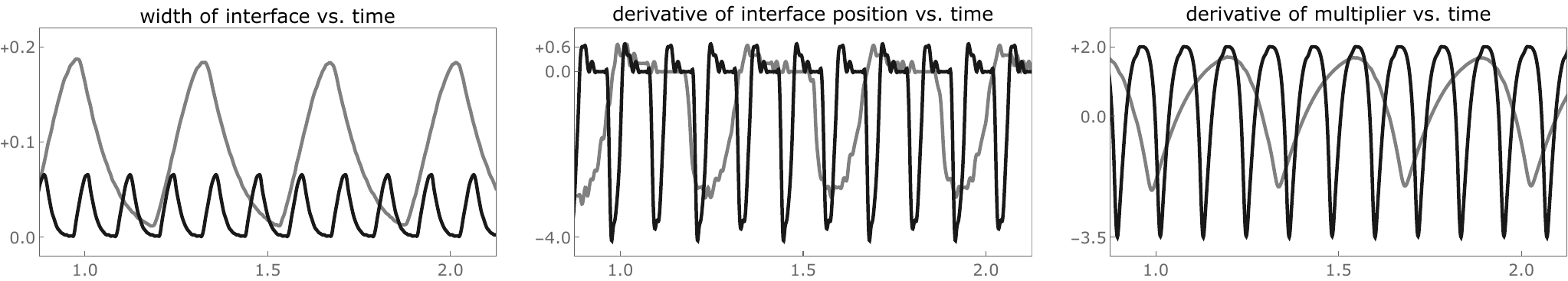}
}%
\caption{%
Oscillatory evolution of the interface width $\xi_+\at{t}-\xi_-\at{t}$, the interface speed $\tfrac12\dot{\xi}_+\at{t}+\tfrac12\dot{\xi}_-\at{t}$, and the change of the mean-field $\dot{\si}\at{t}$. The parameters are 
$\ka=0.5$, $\delta=0.5$ and $\tau=0.05$ (gray, as in Figure \ref{Fig:TWForm2}) or $\tau=0.01$ (black).
}
\label{Fig:TWForm3}%
\end{figure}%
%
%
%-------------------------------------------------------------------------------------------
\subsection{Explicit expressions for traveling waves} 
\label{sect:existenceTW}
%-------------------------------------------------------------------------------------------
%
Traveling wave solutions exist for all admissible parameters $\triple{\ka}{\delta}{\tau}$ but require a consistent \mbox{dynamical} constraint. We now derive the corresponding formulas by solving scalar ODEs with piecewise constant coefficients.
\begin{figure}[ht!]%
\centering{%
\includegraphics[width=0.85\textwidth]{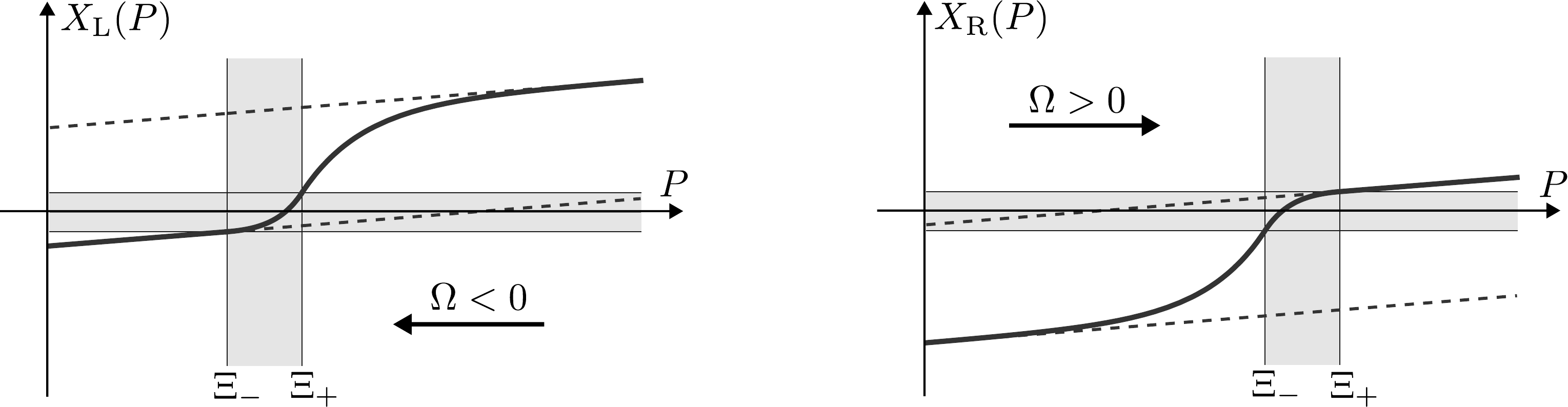}
}%
\caption{%
The two families of traveling waves (solid curves)  from Theorem \ref{Thm:TW} with negative or positive wave speed $\Omega$ along with the two affine envelopes (dashed lines) which have slope $\delta$ and differ by the constant $2$.  The vertical and horizontal gray stripes represent the spinodal region and the phase interface, respectively, where the width of the latter scales like $\tau\abs{\Om} \ln\at{1/\tau}$ according to \eqref{Lem:TW:Eqn4}. The exponential tail in the back of the wave describes the essential small scale effects in the bulk and has the characteristic width
$\tau\abs\Om$.}%
\label{Fig:TW}%
\end{figure}%
\begin{theorem}
\label{Thm:TW}
Equation \eqref{MicroDynamics} admits for the constitutive relations \eqref{DefTheta}$+$\eqref{DefNonl} and any admissible choice of $\tau$, $\delta$, $\ka$   two families of left and right moving traveling wave solutions
\begin{align}
\label{Lem:TW:Eqn0}
x_{\mathrm{TW\,}}\pair{t}{p}=X_{\mathrm{L}/\mathrm{R}}\at{P}\,,\qquad P=p-\Om\,t\,,\qquad \si_{\mathrm{TW\,}}\at{t} = \Sigma_{\mathrm{L}/\mathrm{R}}-\delta\,\at{\Om\,t-\tfrac12}
\end{align}
which are strictly increasing with respect to $p$ and illustrated in Figure \ref{Fig:TW}. These waves are given by
\begin{align}
\notag%\label{Lem:TW:Eqn1}
\begin{split}
X_{\mathrm{L}}\at{P}&=\left\{
\begin{array}{lr}
\!\!\!-\ka+\delta\,\at{P-\Xi_-}&\!\!\!\!\!\!\!\!\!\text{if}\;\;P{\,\leq\,}\Xi_-\\
\!\!\!-\ka-\D\frac{\ka\,\delta}{1-\ka}\,\at{P-\Xi_-}-\frac{\ka\,\tau\,\Om\,\delta}{\at{1-\ka}^2}\,\at{
\exp\at{-\frac{1-\ka}{\ka\,\tau\,\Om}\,\at{P-\Xi_-}}-1
}&\!\!\!\!\!\!\!\!\!\text{if}\;\;\Xi_-{\,\leq\,}P{\,\leq\,}\Xi_+\\
\!\!\!+\ka+\delta\,\at{P-\Xi_+}+\Bat{2\,\at{1-\ka}+\delta\,\at{\Xi_+-\Xi_-}}\at{+1-\exp\at{\D\frac{P-\Xi_+}{\tau\,\Om}}}&\!\!\!\!\!\!\!\!\!\text{if}\;\;\Xi_+{\,\leq\,}P\\
\end{array}\right.
\\
\Sigma_{\mathrm{L}} &= +1-\ka-\tau\,\Om\,\delta -\delta\, \Xi_-
\end{split}
\end{align}
and
\begin{align}
\notag%\label{Lem:TW:Eqn2}
\begin{split}
X_{\mathrm{R}}\at{P}&=
\left\{
\begin{array}{lr}
\!\!\!-\ka+\delta\,\at{P-\Xi_-}+\Bat{2\,\at{1-\ka}+\delta\,\at{\Xi_+-\Xi_-}}\at{-1+\exp\at{\D\frac{P-\Xi_-}{\tau\,\Om}}}&\!\!\!\!\!\!\!\!\!\text{if}\;\;P{\,\leq\,}\Xi_-\\
\!\!\!+\ka-\D\frac{\ka\,\delta}{1-\ka}\,\at{P-\Xi_+}-\frac{\ka\,\tau\,\Om\,\delta}{\at{1-\ka}^2}\,\at{
\exp\at{-\frac{1-\ka}{\ka\,\tau\,\Om}\,\at{P-\Xi_+}}-1
}&\!\!\!\!\!\!\!\!\!\text{if}\;\;\Xi_-{\,\leq\,}P{\,\leq\,}\Xi_+\\
\!\!\!+\ka+\delta\,\at{P-\Xi_+}&\!\!\!\!\!\!\!\!\!\text{if}\;\;\Xi_+{\,\leq\,}P
\end{array}\right.\\
\Sigma_{\mathrm{R}} &= -1+\ka-\tau\,\Om\,\delta -\delta\, \Xi_+
\end{split}
\end{align}
for $\Om<0$ and $\Om>0$, respectively, provided that the transcendental equation
 \begin{align}
\label{Lem:TW:Eqn3}
\frac{2\,\at{1-\ka}^2}{\tau\abs{\Om}\delta}+\frac{1-\ka}{\tau\abs{\Om}}\,\bat{\Xi_+-\Xi_-}
=
\exp\at{\D\frac{1-\ka}{\ka\,\tau\abs{\Om}}\,\bat{\Xi_+-\Xi_-}}-1\,,
\end{align}
is satisfied. In particular, the wave speed $\Om$ can be regarded as the independent parameter and determines the interface width $\Xi_+-\Xi_-$ but not the interface position $\tfrac12\at{\Xi_-+\Xi_+}$.
\end{theorem}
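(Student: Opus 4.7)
The plan is to reduce the nonlocal PDE to a piecewise linear autonomous ODE in the comoving frame and then assemble the three branches of the wave by matching at the interfaces. First I would substitute the ansatz \eqref{Lem:TW:Eqn0} into \eqref{MicroDynamics}. Using $\theta(p)=\delta(p-\tfrac12)$ the combination $\si(t)+\theta(p)$ collapses to $\Sigma+\delta P$, the explicit $t$-dependence cancels, and \eqref{MicroDynamics} reduces to the scalar ODE
$\tau\,\Om\,X^\prime(P)=H^\prime\bat{X(P)}-\Sigma-\delta P$.
Since $X$ is to be strictly increasing I define $\Xi_\mp$ by $X(\Xi_\mp)=\mp\ka$. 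On each of the three intervals $P<\Xi_-$, $\Xi_-<P<\Xi_+$, $P>\Xi_+$ the trilinear law \eqref{DefNonl} turns this into a first-order linear ODE with constant coefficients which I can integrate explicitly: the particular solution is affine with slope $\delta$ on the monotone branches and slope $-\ka\delta/(1-\ka)$ on the spinodal branch, while the homogeneous modes are $\exp(P/(\tau\Om))$ outside the spinodal region and $\exp(-(1-\ka)P/(\ka\tau\Om))$ inside. Continuity of $X^\prime$ at $\Xi_\pm$ is then automatic since $H^\prime$ is continuous and $X$ matches.

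Next I would fix the integration constants by requiring that $X$ not grow exponentially at infinity. For $\Om<0$ the mode $\exp(P/(\tau\Om))$ is unbounded as $P\to-\infty$, so its coefficient on the left branch must vanish; this forces $X_{\mathrm{L}}$ to equal its affine particular solution there, and imposing $X_{\mathrm{L}}(\Xi_-)=-\ka$ fixes $\Sigma_{\mathrm{L}}=1-\ka-\tau\Om\delta-\delta\Xi_-$ as claimed. Writing the general spinodal solution in a form that already enforces the boundary value $-\ka$ at $\Xi_-$, substituting the above $\Sigma_{\mathrm{L}}$, and solving for the remaining homogeneous coefficient is a short algebraic computation that yields the middle line of the $X_{\mathrm{L}}$ formula. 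On the right branch the mode $\exp((P-\Xi_+)/(\tau\Om))$ now decays as $P\to+\infty$ and hence is admissible as the back tail; its prefactor is determined by $X_{\mathrm{L}}(\Xi_+^+)=+\ka$, and the resulting constant $2(1-\ka)+\delta(\Xi_+-\Xi_-)$ precisely encodes the $2$-jump between the two affine envelopes plus the spinodal rise.

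The single remaining nontrivial condition is that the middle formula, which was built to satisfy $X(\Xi_-)=-\ka$, must also satisfy $X(\Xi_+)=+\ka$. Evaluating the spinodal expression at $P=\Xi_+$, cancelling the leading $-\ka$, dividing by $\ka$, and multiplying through by $(1-\ka)^2/(\tau|\Om|\delta)$ rearranges directly into \eqref{Lem:TW:Eqn3}. The case $\Om>0$ is handled symmetrically: the admissibility argument flips, the tail sits on the left branch, and the analogous computation produces the $X_{\mathrm{R}}$, $\Sigma_{\mathrm{R}}$ formulas together with the same transcendental relation (which depends on $\Om$ only through $|\Om|$). Finally, the ODE is invariant under shifts $P\mapsto P+c$ combined with $\Sigma\mapsto\Sigma-\delta c$, so \eqref{Lem:TW:Eqn3} constrains only the width $\Xi_+-\Xi_-$ while the midpoint $\tfrac12(\Xi_-+\Xi_+)$ remains free. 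The main obstacle is purely bookkeeping, namely arranging the three branches so that the integration constants assemble into exactly the quoted closed-form expressions; no delicate existence argument for the transcendental equation is required because its right-hand side is strictly convex in $\Xi_+-\Xi_-$ with the correct limits at $0$ and $\infty$.
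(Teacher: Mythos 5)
Your proposal follows essentially the same route as the paper: pass to the comoving frame to obtain the piecewise-linear scalar ODE $\tau\,\Om\,X'(P)=H'\bat{X(P)}-\Sigma-\delta\,P$, integrate it explicitly on the three regions, discard the exponentially growing mode in front of the interface, fix $\Sigma_{\mathrm{L}/\mathrm{R}}$ and the spinodal integration constant by the matching condition at the entry point, and identify the exit matching condition $X(\Xi_\pm)=\pm\ka$ with the transcendental relation \eqref{Lem:TW:Eqn3}. The details (affine particular solutions with slopes $\delta$ and $-\ka\delta/(1-\ka)$, the decaying back tail with prefactor $2\at{1-\ka}+\delta\at{\Xi_+-\Xi_-}$, the shift invariance explaining why only the width is determined) all match the paper's argument, so the proposal is correct.
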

\begin{proof}
We discuss the case $\Om<0$ only and do not write a lower index in $X$ or $\Sigma$. The ansatz \eqref{Lem:TW:Eqn0} combined with 
\begin{align*}
X\at{P}<-\ka\;\;\text{for}\;\;
P<\Xi_-\,,\quad
-\ka<X\at{P}<+\ka\;\;\text{for}\;\;
\Xi_-<P<\Xi_+\,,\quad
X\at{P}>+\ka\;\;\text{for}\;\;P>\Xi_+
\end{align*}
and \eqref{DefNonl} transforms the dynamical model \eqref{MicroDynamics} into a scalar first order ODE with piecewise constant coefficients, namely
\begin{align}
\label{Lem:TW:PEqn1}
-\tau\,\Om\,X^\prime\at{P} - \delta\, P -\Sigma =\left\{
\begin{array}{cl}
- X\at{P}-1&\text{for $P<\Xi_-$\,,}\smallskip\\
+ \D \frac{1-\ka}{\ka}X\at{P}&\text{for $\Xi_-<P<\Xi_+$\,,}
\smallskip\\
- X\at{P}+1&\text{for $P>\Xi_+$}\,,
\end{array}\right.
\end{align}
where $P=p-\Om\,t$ denotes the variable in the comoving frame. The only non-exponential solution in front of the interface is
\begin{align*}
X\at{P}=\delta\,P+\Sigma+\tau\,\Om\,\delta-1\qquad\text{for}\quad P<\Xi_-
\end{align*}
and the first matching condition 
\begin{align}
\label{Lem:TW:PEqn2}
X\at{\Xi_-}=-\ka  
\end{align}
yields the formula for $\Sigma$. In the interface region, the ODE in \eqref{Lem:TW:PEqn1} implies 
\begin{align*}
X\at{P}=-\frac{\ka}{1-\ka}\,\delta\,\at{P-\Xi_-}-\ka+\frac{\ka\,\tau\,\Om\,\delta}{\at{1-\ka}^2}+C_0\exp\at{-\frac{1-\ka}{\ka\,\tau\,\Om}\,\at{P-\Xi_-}}
\qquad\text{for}\quad \Xi_-<P<\Xi_+
\end{align*}
and \eqref{Lem:TW:PEqn2} ensures that the constant of integration attains the value $C_0=-\ka\,\tau\,\Om\,\delta/\at{1-\ka}^2$. Moreover, the second matching condition
\begin{align}
\label{Lem:TW:PEqn3}
X\at{\Xi_+}=+\ka  
\end{align}
is equivalent to  \eqref{Lem:TW:Eqn3}. Finally, from \eqref{Lem:TW:Eqn4} we infer 
\begin{align*}
X\at{P}=\delta\,P+2-\ka-\delta\,\Xi_-+C_+\exp\at{\frac{P-\Xi_+}{\tau\,\Om}\,}\qquad\text{for}\quad P>\Xi_+
\end{align*}
and $C_+=-2\,\at{1-\ka}-\delta\,\at{\Xi_+-\Xi_-}$ is an immediate consequence of  \eqref{Lem:TW:PEqn3}.
\end{proof}
\paragraph{Remarks}
\begin{enumerate}
\item 
The left hand side in condition \eqref{Lem:TW:Eqn3} is linear and increasing with respect to $\Xi_+-\Xi_-$, while the right hand side is also increasing but strictly convex. Comparing the respective values at $0$ and $\infty$ we conclude that for any $\Om\neq 0$ there exists a unique positive interface width. Moreover, by asymptotic standard techniques we show
\begin{align}
\label{Lem:TW:Eqn4}
\Xi_+-\Xi_-=\frac{\tau\abs{\Om}\ka}{1-\ka}\,\ln\at{\frac{2\,\at{1-\ka}^2}{\tau\abs{\Om}\delta}\,\Bat{1+O\at{\tau\abs{\Om}}}}
=\frac{\abs{\Om}\ka}{1-\ka}\,\tau\,\ln\at{1/\tau}\,\Bat{1+o\at{1}}\,,
\end{align} 
where $o\at{1}$ means arbitrarily small for small $\tau$. Notice that the left hand side of \eqref{Lem:TW:Eqn3} and the middle part in \eqref{Lem:TW:Eqn4} are not defined for $\delta=0$, which is another indication for the importance of the \BMHC inhomogeneities modeled by $\theta$.\EMHC
\item 
The traveling wave solutions from Theorem \ref{Thm:TW} are formally defined for all $p\in\Rset$ and do not involve any non-local side condition. However, the formulas can also be evaluated on the interval $p\in\ccinterval{0}{1}$ and provide via
\begin{align}
\label{TWConstraint}
\ell_{\text{TW}}\at{t}=\int\limits_{\Xi_--\Om\,t}^{\Xi_+-\Om\,t} X_{\mathrm{L}/\mathrm{R}}\at{P}\dint{P}=\at{2-\ka+\tfrac12\,\delta}-\at{2+\delta}\bat{\Xi-\Om\,t}+O\at{\tau\,\ln\at{\frac{1}{\tau}}}
\end{align}
the value of the corresponding time dependent dynamical constraint, where $\Xi=\tfrac12\at{\Xi_-+\Xi_+}$. The error terms stem from the small interface width as well as the exponential tails in the back, but the main part grows linearly in time and is consistent with the limit model as described in \S\ref{sect:Intro}.
\item 
In the bilinear limiting case $\ka=0$, we have $\Xi_-=\Xi_+=\Xi$ and the traveling wave formulas reduce to
\begin{align*}
X_{\mathrm{L}}\at{P}&=\delta\,\at{P-\Xi}+\left\{
\begin{array}{lcr}
0&\text{for}&P<\Xi\,,\\2\,\at{+1-\exp\at{\D\frac{P-\Xi}{\tau\,\Om}}}&\text{for}&P>\Xi\,,\\
\end{array}\right.
\end{align*}
and
\begin{align*}
X_{\mathrm{R}}\at{P}&=
\delta\,\at{P-\Xi}+
\left\{
\begin{array}{lcr}
2\,\at{-1+\exp\at{\D\frac{P-\Xi}{\tau\,\Om}}}&\text{for}&P<\Xi\,,\\
0&\text{for}&P>\Xi\,.
\end{array}\right.\,.
\end{align*}
In particular, the interface width vanishes in the limit $\ka\to0$ while the exponential tails in the back are still present. \BMHC Similar formulas have been derived in \cite[Section 3.2]{TV08} for traveling wave solutions  to viscoelastic wave equations.\EMHC 
\end{enumerate}
\BMHC Of course, Theorem \ref{Thm:TW} is intimately related to 
\eqref{DefTheta} and \eqref{DefNonl}, the piecewise \mbox{constitutive} assumptions for $\Phi^\prime$ and $\theta$. However,
numerical solutions indicate that stable traveling waves solutions are also relevant in a more general setting and still describe, at least in some parameter regimes, the fine structure of moving phase interfaces. The key idea is that both the width and the speed are no longer constant but adapt to
the respective values in a traveling wave that is compatible with the current values of $\dot\ell\at{t}$ and $\delta\at{t}=\theta^\prime\at{\xi_\pm\at{t}}$. A similar relaxation process has been investigated in \cite{TV10b} for phase transition waves in a lattice of overdamped viscoelatic springs with external forcing. That paper also proposes a two-dimensional gradient ODE for the effective propagation of the phase interface which accounts for oscillations on small time scales. For the mean-field model \eqref{MicroDynamics}+\eqref{MicroMultiplier} it remains a challenging task to derive similar low-dimensional dynamical equations that capture the temporal oscillations in the regime of small imhomogeneities as displayed in Figures \ref{Fig:TWForm2} and \ref{Fig:TWForm3}.
\EMHC 
%
%
%-------------------------------------------------------------------------------------------
\subsection{Linearized equation in the comoving frame}
%-------------------------------------------------------------------------------------------
%
%
Linearizing \eqref{MicroDynamics}+\eqref{MicroMultiplier} around a traveling wave solution  provided by Theorem \ref{Thm:TW} we obtain the linear but \mbox{non-autonomous} equation
\begin{align}
\label{LinStab.Dynamics}
\tau\,\partial_t z\pair{t}{p} = -z\pair{t}{p}+\ka^{-1}\,\psi\pair{t}{p}\,z\pair{t}{p}-\ka^{-1}\int\limits_{0}^{1}\psi\pair{t}{q}\, z\pair{t}{q}\dint{q}\,,
\end{align}
where the coefficient function
\begin{align*}
\psi\pair{t}{p}=\left\{
\begin{array}{ccl}
1&&\text{for}\;\;\;
\Xi_-<p-\Om\,t<\Xi_+\\0&&\text{otherwise}
\end{array}
\right.
\end{align*}
represents the stripe in which $x_{\mathrm{TW}}$ attains values inside the spinodal region. \BMHC See Figure \ref{Fig:LinCoeff} for an illustration and notice that the Cauchy Problem to \eqref{LinStab.Dynamics} is well-posed in any $\fspaceL^p$-space. \EMHC The integral term in  \eqref{LinStab.Dynamics} stems from the multiplier rule \eqref{MicroMultiplier} and ensures via
\begin{align*}
\tau\,\dot{m}\at{t} =-m\at{t}\qquad\text{with}\qquad  m\at{t}=\int\limits_0^1z\pair{t}{p}\dint{p}
\end{align*}
the analogue to the dynamical constraint \eqref{MicroConstraint}. In particular, $m\at{0}=0$ implies $m\at{t}=0$ for all $t\geq0$.
\begin{figure}[ht!]%
\centering{%
\includegraphics[width=0.35\textwidth]{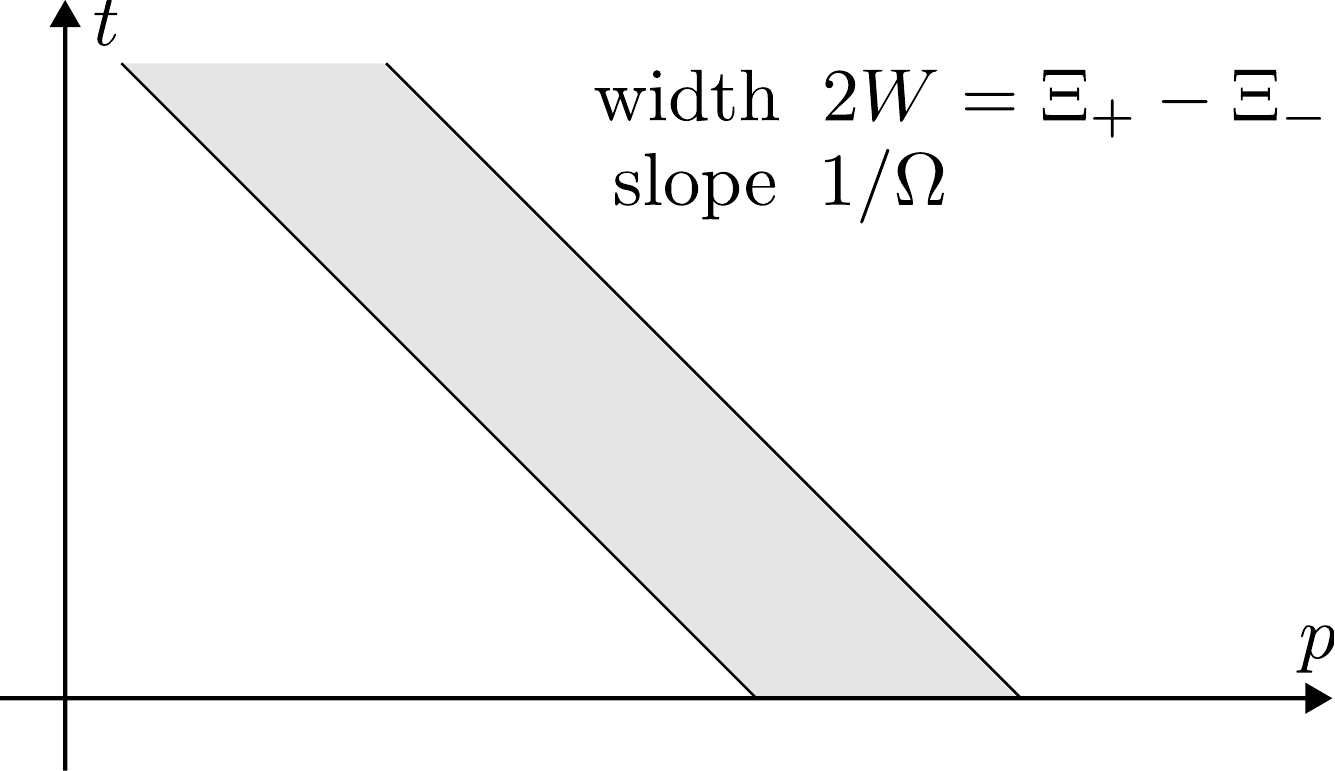}
}%
\caption{%
The support of the piecewise constant coefficient function $\psi$ in the linearized model \eqref{LinStab.Dynamics}. For the traveling wave from Theorem \ref{Thm:TW}, the parameters $\ka$, $\tau$, $\Om$ and $W$ are coupled by the scalar equation \eqref{Lem:TW:Eqn3} but the evolution equation \eqref{LinStab.Dynamics} can also be studied without that condition. On a heuristic level we then expect that the trivial solution switches from stable to unstable if $W$ exceeds a certain critical value (depending on $\ka$ and $\Om$) or if $\Omega$ falls below a certain threshold (depending on $\ka$ and $W$). 
}%
\label{Fig:LinCoeff}%
\end{figure}%
\bigpar
For the stability analysis it is convenient to regard $p$ as a variable in $\Rset$ and to pass to the co-moving frame so that the coefficient function $\psi$ 
transforms into the stationary indicator function
\begin{align*}
\Psi\at{P}=
\left\{
\begin{array}{ccl}
1&&\text{if}\;\; 
\abs{P}<W,\\0&&\text{else},
\end{array}
\right.
\end{align*}
where 
\begin{align}
\label{DefWidth}
W=\tfrac12\at{\Xi_+-\Xi_-}
\end{align}
denotes the half of the interface width in the underlying traveling wave. The eigenvalue problem of the transformed equation reads
\begin{align}
\label{LinStab.EP}
 \tau\, \la\,Z\at{P} = \tau\, \Om\,Z^\prime\at{P}-Z\at{P}+\ka^{-1}\,\Psi\at{P}\,Z\at{P}-\ka^{-1}\int\limits_{-\infty}^{+\infty} \Psi\at{Q}\,Z\at{Q}\dint{Q}\,,\qquad 
\end{align} 
and describes after the transformation
\begin{align}
\label{EVTrafo}
z\pair{t}{p}=\exp\at{\la\,t}\,Z\at{P}\,,\qquad P=p-\tfrac12\at{\Xi_-+\Xi_+}-\Om\,t
\end{align}
the fundamental modes of the linear but non-autonomous equation \eqref{LinStab.Dynamics} that corresponds to the eigenvalue $\la\in\Cset$.
\par
The key idea for our analysis is to regard \eqref{LinStab.EP} as three ODEs with constant coefficients that are coupled by two matching conditions and one integral constraint.  More precisely, setting
\begin{align*}
Z\at{P}=\left\{\begin{array}{cclcl}%
Z_-\at{P}&&\text{for }P\in I_-&\!\!\!\!:=\!\!\!\!&\oointerval{-\infty}{-W}\\
Z_0\at{P}&&\text{for }P\in I_0&\!\!\!\!:=\!\!\!\!&\oointerval{-W}{+W}\\
Z_+\at{P}&&\text{for }P\in I_+&\!\!\!\!:=\!\!\!\!&\oointerval{+W}{+\infty}
\end{array}\right.%
\end{align*}
we readily verify  the differential equations
\begin{align}
\label{LinStab.ODEs}
\at{\ka\,\tau\,\la +\ka}\,Z_\mp\at{P} = \ka\,\tau\,\Om\, Z^\prime_\mp\at{P}-\zeta\,, \qquad\quad \at{\ka\,\tau\,\la +\ka-1}\,Z_0\at{P} = \ka\,\tau\,\Om\, Z^\prime_0\at{P}-\zeta\,,
\end{align}
where the non-local mean-field  is given by
\begin{align}
\label{LinStab.Nonlocal}
\zeta = \int\limits_{-W}^{+W} Z_0\at{q}\dint{q}\,.
\end{align}
Moreover, the equations
\begin{align}
\label{LinStab.Matching}
Z_-\at{-W}=Z_0\at{-W}\,,\qquad Z_0\at{+W}=Z_+\at{+W} 
\end{align}
hold in the sense of one-sided limits and guarantee the continuity of $Z$ at $P=\mp W$.
%
%
%-------------------------------------------------------------------------------------------
\subsection{Characterization of the spectrum}
%-------------------------------------------------------------------------------------------
%
%
We first relate the complex eigenvalue to a transcendental equation that involves both exponential and polynomial terms in $\la$. This result regards $\ka$, $\tau$, $\Om$ and $W$ as independent parameters and is independent of \eqref{Lem:TW:Eqn3}. Numerical results are presented in Figure \ref{Fig:SpectrumG}.
\begin{figure}[ht!]%
\centering{%
\includegraphics[width=0.95\textwidth]{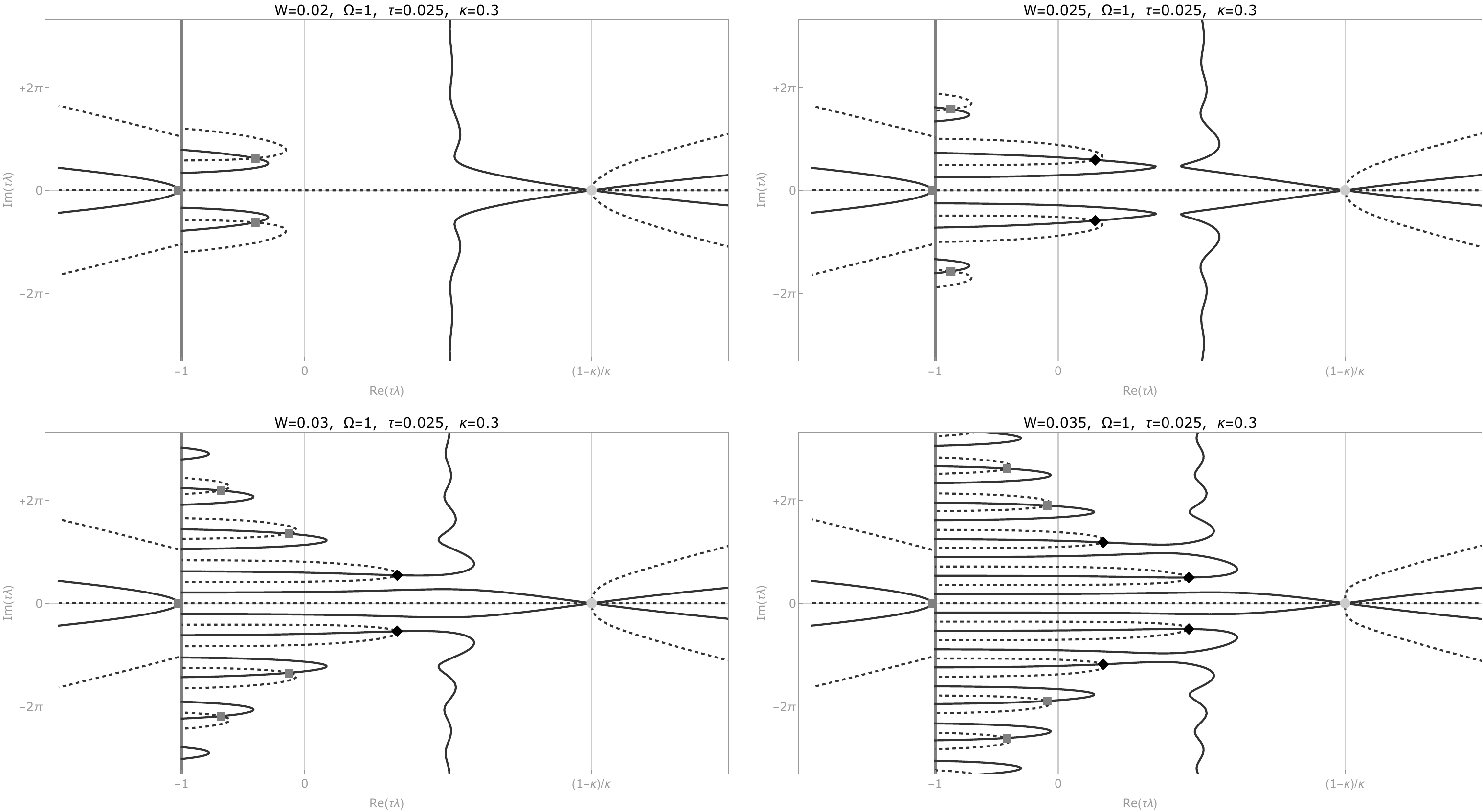}
}%
\caption{%
The spectrum of the non-local linear operator in \eqref{LinStab.EP} for fixed values of $\tau$, $\ka$, $\Om$ and three different choices for $W$. Stable and unstable eigenvalues correspond to gray squares and black diamonds, respectively, while the continuous part coincides with the gray vertical line. The solid and dashed curves represent the zero sets of the real and the imaginary parts of \eqref{LinStab.Thm1}+\eqref{LinStab.Thm2}, respectively, but the light gray point at $\at{1-\ka}/\ka$ does not belong to the  spectrum.
}%
\label{Fig:SpectrumG}%
\end{figure}%
\begin{theorem}\label{Thm:Spectrum}
The linear eigenvalue problem \eqref{LinStab.EP} has the following spectral properties in the space of all functions that grow at most  linearly.
\begin{enumerate}
\item[] \emph{\ul{point spectrum, part $S_-$\,:}} 
The complex number $\la$ with $\mhRe\at{\tau\,\la}<-1$ is an eigenvalue if and only if it satisfies
\begin{align}
\label{LinStab.Thm1}
\exp\at{+\frac{\ka\,\tau\,\la+\ka-1}{\ka\,\tau\abs{\Om}}\,2\,W}-1=\frac{\;\bat{\ka\,\tau\,\la+\ka}\,\bat{\ka\,\tau\,\la+\ka-1}\,\bat{\ka\,\tau\,\la+\ka-1+2\,W}\;}{\ka\,\tau\abs{\Om}}\,.
\end{align}
\item[] \emph{\ul{continuous spectrum $S_0$\,:}}
For every $\la\in\Cset$ with $\mhRe\at{\tau\,\la}=-1$ there exists a corresponding eigenfunction.
\item[] \emph{\ul{point spectrum, part $S_+$\,:}} 
The complex number $\la$ with $\mhRe\at{\tau\,\la}>-1$ is an eigenvalue if and only if it satisfies
\begin{align}
\label{LinStab.Thm2}
1-\exp\at{-\frac{\ka\,\tau\,\la+\ka-1}{\ka\,\tau\abs{\Om}}\,2\,W}=\frac{\;\bat{\ka\,\tau\,\la+\ka}\,\bat{\ka\,\tau\,\la+\ka-1}\,\bat{\ka\,\tau\,\la+\ka-1+2\,W}\;}{\ka\,\tau\abs{\Om}}
\end{align}
as well as $\tau\,\la\neq \at{1-\ka}/\ka$.
\end{enumerate} 
Moreover, all proper eigenvalues  $\la\in S_-\cup S_+$ are simple and have bounded eigenfunctions.
\end{theorem}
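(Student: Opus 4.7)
The approach exploits the piecewise constant nature of $\Psi$. On each of the three intervals $I_-$, $I_0$, $I_+$ the eigenvalue equation \eqref{LinStab.EP} is an autonomous scalar linear ODE with a known inhomogeneity $\zeta$, and the pieces are coupled only through the matching relations \eqref{LinStab.Matching} and the non-local integral \eqref{LinStab.Nonlocal}. The plan is therefore to (i)~integrate each ODE explicitly, (ii)~impose the at-most-linear growth condition at $\pm\infty$ to discard the exponential modes that would blow up, and (iii)~combine the matching at $\pm W$ with the integral constraint to reduce everything to a single scalar dispersion relation for $\la$.

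With the abbreviations $\alpha\deq\ka\tau\la+\ka$ and $\beta\deq\alpha-1$, integration of \eqref{LinStab.ODEs} yields
\[
Z_\mp\at{P}=A_\mp\exp\at{\tfrac{\alpha}{\ka\tau\Om}P}-\frac{\zeta}{\alpha}\,,\qquad Z_0\at{P}=B\exp\at{\tfrac{\beta}{\ka\tau\Om}P}-\frac{\zeta}{\beta}\,,
\]
valid as long as $\alpha\neq0$ and $\beta\neq0$. The exponential on $I_\pm$ blows up at $\pm\infty$ precisely when $\mhRe\at{\alpha}\cdot\sgn\Om$ has sign $\pm1$, so the growth constraint forces exactly one of the two outer coefficients $A_\pm$ to vanish whenever $\mhRe\at{\tau\la}\neq-1$, while on the line $\mhRe\at{\tau\la}=-1$ both outer exponentials are purely oscillatory and both $A_-$ and $A_+$ remain admissible.

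For $\mhRe\at{\tau\la}\neq-1$ the surviving $A_\pm$ and the inner coefficient $B$ are expressible via \eqref{LinStab.Matching} as explicit scalar multiples of $\zeta$. Substituting the resulting $Z_0$ into \eqref{LinStab.Nonlocal} and using the elementary identity $\tfrac{1}{\alpha}-\tfrac{1}{\beta}=-\tfrac{1}{\alpha\beta}$ collapses the system into one scalar equation; absorbing the sign of $\Om$ into $\abs{\Om}$ then reproduces \eqref{LinStab.Thm1} in the regime $\mhRe\at{\tau\la}<-1$ and \eqref{LinStab.Thm2} in the regime $\mhRe\at{\tau\la}>-1$. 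On the line $\mhRe\at{\tau\la}=-1$ both outer coefficients survive, so the two matching relations and the integral constraint constitute an underdetermined linear system in $\at{A_-,A_+,B}$ that admits a nontrivial solution for \emph{every} such $\la$, producing the continuous spectrum $S_0$.

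Simplicity and boundedness of the eigenfunctions for $\la\in S_-\cup S_+$ are immediate from the construction: $A_\pm$ and $B$ are fixed scalar multiples of $\zeta$, so the eigenspace is one-dimensional, and on the side where $A_\pm$ has been killed $Z$ equals the constant $-\zeta/\alpha$, while on the opposite side it is a decaying exponential plus that same constant. The main obstacle I anticipate is the sign bookkeeping of step (ii), where both signs of $\Om$ and both signs of $\mhRe\at{\tau\la}+1$ must be reconciled with the $\abs{\Om}$ that appears in the final formulas. In addition, the two degenerate points $\alpha=0$ (which sits on the continuous-spectrum line and demands a separate resonant-ODE analysis) and $\beta=0$ (the excluded real value $\tau\la=\at{1-\ka}/\ka$, where $Z_0$ becomes affine and its symmetric integral forces $\zeta=0$ and hence a trivial solution) need to be treated explicitly.
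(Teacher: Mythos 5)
Your proposal follows essentially the same route as the paper's proof: explicit integration of the three constant-coefficient ODEs, elimination of the growing exponential mode via the growth condition at $\pm\infty$, reduction through the matching conditions \eqref{LinStab.Matching} and the constraint \eqref{LinStab.Nonlocal} to the scalar dispersion relations \eqref{LinStab.Thm1} and \eqref{LinStab.Thm2}, the underdetermined system on the line $\mhRe\at{\tau\,\la}=-1$, and separate treatment of the degenerate values $\ka\,\tau\,\la+\ka\in\{0,1\}$. One small correction to your sketch of the case $\tau\,\la=\at{1-\ka}/\ka$: the symmetric integral merely fixes the constant in the affine $Z_0$, and it is the matching at $P=+W$ combined with the forced vanishing of the outer coefficient that yields $\zeta=0$ and excludes this value from the spectrum.
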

\begin{proof}
We present the arguments for $\Om>0$ only. The assertions for $\Om<0$ can be derived along the sames lines. We further start with the assumption
\begin{math}
\ka\,\tau\,\la+\ka\notin\{0,\,1\} 
\end{math} 
and discuss the two remaining special cases, in which the ODE system \eqref{LinStab.ODEs} degenerates, at the very end of this proof.
\par
\emph{\ul{Non-vanishing mean-field}}\,:  
Assuming $\zeta=0$ we deduce from the ODEs \eqref{LinStab.ODEs} that $Z_-$, $Z_0$, and $Z_+$  are all exponential functions in the variable $P$. The non-local equation \eqref{LinStab.Nonlocal} then implies $Z_0\equiv0$ and $Z_-\equiv 0\equiv Z_+$ follows from the matching conditions in \eqref{LinStab.Matching}. We have thus shown that any nontrivial eigenfunction corresponds to $\zeta\neq0$
\par
\emph{\ul{Solution formulas}}\,: 
Applying standard arguments to each ODE in \eqref{LinStab.ODEs} we get
\begin{align}
\label{LinStab.ThmProof11}
Z_\mp\at{P}=-\frac{\zeta}{\ka\,\tau\,\la+\ka}+\zeta\,C_{\mp}\exp\at{\frac{\ka\,\tau\,\la+\ka}{\ka\,\tau\,\Om}\,P}\qquad\text{for $P\in I_{\mp}$}
\end{align}
as well as
\begin{align}
\label{LinStab.ThmProof12}
Z_0\at{P}=-\frac{\zeta}{\ka\,\tau\,\la+\ka-1}+\zeta\,C_{0}\exp\at{\frac{\ka\,\tau\,\la+\ka-1}{\ka\,\tau\,\Om}\,P}\qquad\text{for $P\in I_{0}$}\,,
\end{align}
where $C_-$, $C_0$ and $C_+$ represent the constants of integration. Moreover, direct computations reveal that 
\begin{align}
\label{LinStab.ThmProof13}
C_0=\frac{\D \frac{\ka\,\tau\,\la+\ka-1 + 2\,W}{\ka\,\tau\,\Om}}{\;\;\D\exp\at{+\frac{\ka\,\tau\,\la+\ka-1}{\ka\,\tau\,\Om}\,W}-\exp\at{-\frac{\ka\,\tau\,\la+\ka-1}{\ka\,\tau\,\Om}\,W}\;\;}
\end{align}
is equivalent to the constraint in \eqref{LinStab.Nonlocal}.
\par
\emph{\ul{Case $\mhRe\at{\tau\,\la}<-1$}}\,: For any eigenfunction we have 
\begin{align}
\label{LinStab.ThmProof21}
C_-=0
\end{align}
because otherwise $Z_-$ would grow exponentially for $P\to-\infty$. The matching condition at $P=-W$ thus implies
\begin{align}
\label{LinStab.ThmProof22}
C_0=\frac{1}{\;\at{\ka\,\tau\,\la+\ka}\,\at{\ka\,\tau\,\la+\ka-1}}\,\exp\at{+\frac{\ka\,\tau\,\la+\ka-1}{\ka\,\tau\,\Om}\,W}
\end{align}
thanks to \eqref{LinStab.ThmProof11} and \eqref{LinStab.ThmProof12} and in combination with \eqref{LinStab.ThmProof13} we identify \eqref{LinStab.Thm1} as a necessary condition after elementary computations.  On the other hand, the validity of \eqref{LinStab.Thm1} ensures that \eqref{LinStab.ThmProof11} and \eqref{LinStab.ThmProof12} yield for any $\zeta\neq0$ a bounded eigenfunction provided that the constants are chosen by \eqref{LinStab.ThmProof21}, \eqref{LinStab.ThmProof22} and 
\begin{align*}
C_+=\frac{1}{\;\at{\ka\,\tau\,\la+\ka}\,\at{\ka\,\tau\,\la+\ka-1}}\at{\exp\at{+\frac{\ka\,\la+\ka-2}{\ka\,\tau\,\Om}\,W}-\exp\at{-\frac{\ka\,\tau\,\la+\ka}{\tau\,\ka\,\Om}\,W}}\,,
\end{align*}
where the latter identity reflects  the matching condition at $P=+W$.
\par
\emph{\ul{Case $\mhRe\at{\tau\,\la}>-1$}}\,: 
The growth restriction for $Z$ requires
\begin{align*}
C_+=0
\end{align*} 
and the matching condition at $P=+W$ implies
\begin{align*}
C_0=\frac{1}{\;\at{\ka\,\tau\,\la+\ka}\,\at{\ka\,\tau\,\la+\ka-1}}\, \exp\at{-\frac{\ka\,\tau\,\la+\ka-1}{\ka\,\tau\,\Om}\,W}\,,
\end{align*}
which is due to \eqref{LinStab.ThmProof13} equivalent to \eqref{LinStab.Thm2}. Moreover, the choice
\begin{align*}
C_-=\frac{1}{\;\at{\ka\,\tau\,\la+\ka}\,\at{\ka\,\tau\,\la+\ka-1}}\, \at{\exp\at{-\frac{\ka\,\tau\,\la+\ka-2}{\ka\,\tau\,\Om}\,W}-
\exp\at{+\frac{\ka\,\tau\,\la+\ka}{\tau\,\ka\,\Om}\,W}}
\end{align*}
guarantees the matching condition at $P=-W$.
\par
\emph{\ul{Case $\mhRe\at{\tau\,\la}=-1$}}\,: 
Due to $\ka\,\tau\,\la+\ka=\iu\,\nu$ neither $C_-$ or $C_+$ must be zero. The solution formulas \eqref{LinStab.ThmProof11} and \eqref{LinStab.ThmProof12} thus provide for any choice of $\nu\neq0$ and $\zeta \neq0$ a nontrivial but bounded eigenfunction provided that we first compute $C_0$ as in \eqref{LinStab.ThmProof13} and choose afterwards both $C_-$ and $C_+$ to satisfy the matching conditions at $P=-W$ and $P=+W$.
\par
\emph{\ul{Special case $\tau\,\la=-1$}}\,: 
On the interval $I_0$ we argue as in the previous case and obtain 
\begin{align*}
Z_0\at{P}=+\zeta+\zeta\,C_0\,\exp\at{-\frac{P}{\ka\,\tau\,\Om}}\,,\qquad 
C_0=\frac{\D \frac{2\,W-1}{\ka\,\tau\,\Om}}{\;\;\D\exp\at{-\frac{W}{\ka\,\tau\,\Om}}-\exp\at{+\frac{W}{\ka\,\tau\,\Om}}\;\;}
\end{align*}
by evaluating \eqref{LinStab.ThmProof12} and \eqref{LinStab.ThmProof13}, where $\zeta$ is again a free parameter. From the degenerate ODEs on $I_-$ and $I_+$ (see \eqref{LinStab.ODEs}) we then deduce
\begin{align*}
Z_-\at{P}=\frac{\zeta}{\ka\,\tau\,\Om}\,P+\zeta\,D_-\,,\qquad Z_+\at{P}=\frac{\zeta}{\ka\,\tau\,\Om}\,P+\zeta\,D_+\,,
\end{align*}
where 
\begin{align*}
D_-=1+\frac{W}{\ka\,\tau\,\Om}+C_0\exp\at{+\frac{W}{\ka\,\tau\,\Om}}\,,\qquad 
D_+=1-\frac{W}{\ka\,\tau\,\Om}+C_0\exp\at{-\frac{W}{\ka\,\tau\,\Om}}
\end{align*}
is a consequence of the matching conditions \eqref{LinStab.Matching}.
\par
\emph{\ul{Special case $\tau\,\la=\at{1-\ka}/\ka$}}\,: 
The degenerate ODE for $Z_0$ in \eqref{LinStab.ODEs} implies
\begin{align*}
Z_0\at{P}=\frac{\zeta}{2\,W}+\frac{\zeta}{\ka\,\tau\,\Om}\,P\qquad\text{and hence}\qquad Z_0\at{+W}=\frac{\zeta}{2\,W}+\frac{\zeta\,W}{\ka\,\tau\,\Om}\,,
\end{align*}
where the constant of integration has been determined by \eqref{LinStab.Nonlocal}. On the other hand, since $C_+$ must vanish we get $Z_+\at{W}=-\zeta$ and the matching condition at $P=+W$  ensures $\zeta =0$.  We thus conclude that this particular value of $\la$ does not belong to the spectrum, although it satisfies equation \eqref{LinStab.Thm2}.
\end{proof}
By elementary real analysis we show that $S_-$ contains for all sufficiently small $W$ precisely  one real eigenvalue and it seems that further elements do not exists. However, the properties of $S_-$ are not relevant in our context since the stability of the trivial solution to \eqref{LinStab.Dynamics} is completely determined by $S_+$.
%
%
%-------------------------------------------------------------------------------------------
\subsection{Instability of traveling waves}
%-------------------------------------------------------------------------------------------
%
The combination of the interface condition \eqref{Lem:TW:Eqn4} and the spectral equation \eqref{LinStab.Thm2} for $S_+$ enables us to study the stability of traveling waves by means of two dimensional plots as illustrated in Figures \ref{Fig:SpectrumTW1} and \ref{Fig:SpectrumTW2}. In the limit of vanishing $\tau$ we can even derive an explicit instability criterion from a suitable rescaling of the spectral equation for  $S_+$
\begin{lemma}\label{Lem:Unstable}
Using \eqref{Lem:TW:Eqn3} as well as
\begin{align}
\label{Cor:AsympSpectralScaling}
\mu= \frac{\tau\,\la}{\eps},\qquad \eps=\frac{\tau\abs{\Om}}{2\,W}
\end{align}
the transcendental equation \eqref{LinStab.Thm2} reads
\begin{align*}
\exp\at{+\mu}=-\frac{\ka}{\delta}\,\frac
{2\,\at{1-\ka}^2+\at{1-\ka}\,\delta\,2\,W+\tau\abs{\Om}\delta}
{\at{\ka\,\eps\,\mu+\ka}\,\at{\ka\,\eps\,\mu+\ka-1}\,\at{\ka\,\eps\,\mu+\ka-1+2\,W}-\ka\,\tau\abs{\Om}}\,.
\end{align*}
Moreover, the right hand side converges for $\tau\to0$ and pointwise in $\mu$ to $-2/\delta$.
\end{lemma}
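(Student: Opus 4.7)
The statement is essentially a reformulation of \eqref{LinStab.Thm2} combined with a pointwise limit, so the work is algebraic and the main obstacle is bookkeeping rather than analysis. The starting observation is the identity $2W/\at{\tau\abs{\Om}} = 1/\eps$, which is immediate from the definition of $\eps$ in \eqref{Cor:AsympSpectralScaling}. Applied to the exponent in \eqref{LinStab.Thm2} after the substitution $\tau\,\la=\eps\,\mu$, it gives
\begin{align*}
-\frac{\ka\,\tau\,\la+\ka-1}{\ka\,\tau\abs{\Om}}\,2W
= -\frac{\ka\,\eps\,\mu+\ka-1}{\ka\,\eps}
= -\mu+\frac{1-\ka}{\ka\,\eps}\,,
\end{align*}
so the exponential factorises as $\exp\at{-\mu}\,\exp\at{\at{1-\ka}/\at{\ka\,\eps}}$, while the polynomial on the right-hand side of \eqref{LinStab.Thm2} is already expressed in terms of $\eps\,\mu$, $W$ and $\tau\abs{\Om}$.

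The decisive step is to absorb the diverging factor $\exp\at{\at{1-\ka}/\at{\ka\,\eps}}$ using the traveling wave constraint \eqref{Lem:TW:Eqn3}. With $\Xi_+-\Xi_-=2W$ and after multiplying by $\tau\abs{\Om}\,\delta$, that equation becomes
\begin{align*}
\tau\abs{\Om}\,\delta\,\exp\at{\tfrac{1-\ka}{\ka\,\eps}}
= 2\at{1-\ka}^2+\at{1-\ka}\,\delta\cdot 2W+\tau\abs{\Om}\,\delta\,.
\end{align*}
Substituting this identity into the reformulated version of \eqref{LinStab.Thm2}, clearing the denominator $\ka\,\tau\abs{\Om}$, and solving for $\exp\at{\mu}$ reproduces the rational expression asserted in the lemma. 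The overall minus sign arises because the constant $1$ on the left of \eqref{LinStab.Thm2} is moved to the right, where it combines with the polynomial to form the factor $\at{\ka\,\eps\,\mu+\ka}\at{\ka\,\eps\,\mu+\ka-1}\at{\ka\,\eps\,\mu+\ka-1+2W}-\ka\,\tau\abs{\Om}$ appearing in the denominator.

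For the asymptotic claim I would invoke the interface width estimate \eqref{Lem:TW:Eqn4}, which yields $2W\sim\at{\ka\abs{\Om}/\at{1-\ka}}\,\tau\,\ln\at{1/\tau}\to 0$, hence $\tau\abs{\Om}\to 0$ and $\eps=\tau\abs{\Om}/\at{2W}\sim\at{1-\ka}/\at{\ka\,\ln\at{1/\tau}}\to 0$. For fixed $\mu\in\Cset$ the numerator of the rational expression then tends to $2\at{1-\ka}^2$ and the denominator to $\ka\cdot\at{\ka-1}\cdot\at{\ka-1}=\ka\at{1-\ka}^2$, so the common factor $\at{1-\ka}^2$ cancels and the limit equals $-\at{\ka/\delta}\cdot 2/\ka=-2/\delta$. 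The only conceptual subtlety is to recognise that \eqref{Lem:TW:Eqn3} is exactly the identity needed to neutralise the otherwise divergent exponential $\exp\at{\at{1-\ka}/\at{\ka\,\eps}}$; once that cancellation is performed, the remainder of the argument is routine.
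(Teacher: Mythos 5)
Your proposal is correct and follows essentially the same route as the paper: you rewrite the exponent using $2W/(\tau\abs{\Om})=1/\eps$ and $\tau\la=\eps\mu$, eliminate the divergent factor $\exp\bat{(1-\ka)/(\ka\,\eps)}$ via the traveling-wave condition \eqref{Lem:TW:Eqn3}, and then pass to the limit using the scaling of $W$ and $\eps$ from \eqref{Lem:TW:Eqn4}. In fact you spell out the final limit computation (numerator $\to 2(1-\ka)^2$, denominator $\to \ka(1-\ka)^2$) more explicitly than the paper, which simply states that the assertions follow from the scaling.
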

\begin{figure}[ht!]%
\centering{%
\includegraphics[width=0.95\textwidth]{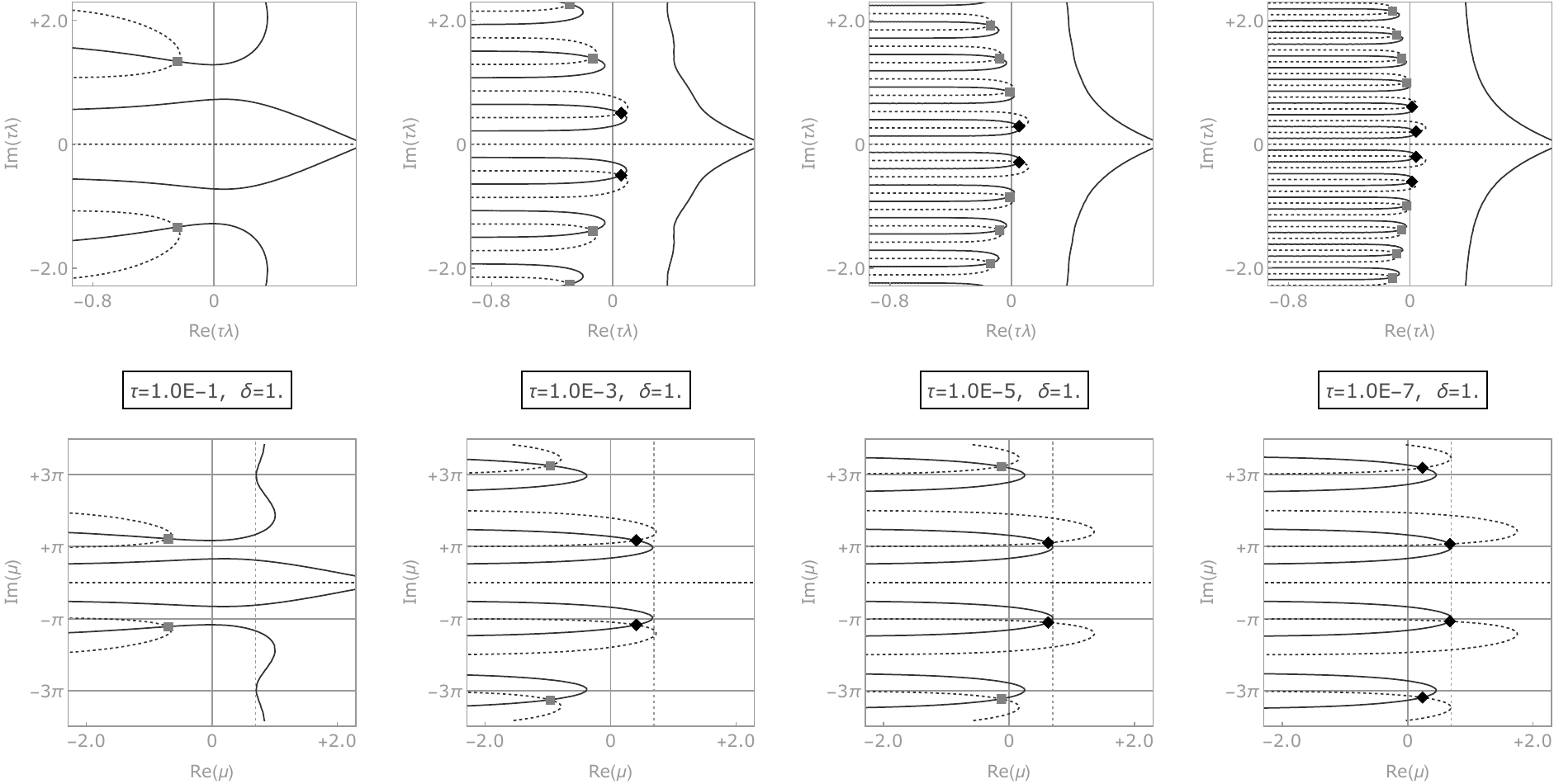}
}%
\caption{%
Asymptotic instability of the traveling waves for $\ka=0.5$, $\delta=1.0$, and $\abs{\Om}=1$. \emph{Top}. The solution set to  \eqref{LinStab.Thm2}
is shown for several choices of $\tau$ and similarly to Figure \ref{Fig:SpectrumG}. \emph{Bottom}. The eigenvalues near the complex origin are unstable for sufficiently small $\tau$ since the reals parts converge after rescaling as in \eqref{Cor:AsympSpectralScaling} to the limit $\ln\at{2/\delta}>0$ (dotted vertical line).
}%
\label{Fig:SpectrumTW1}%
\end{figure}%
\begin{figure}[ht!]%
\centering{%
\includegraphics[width=0.95\textwidth]{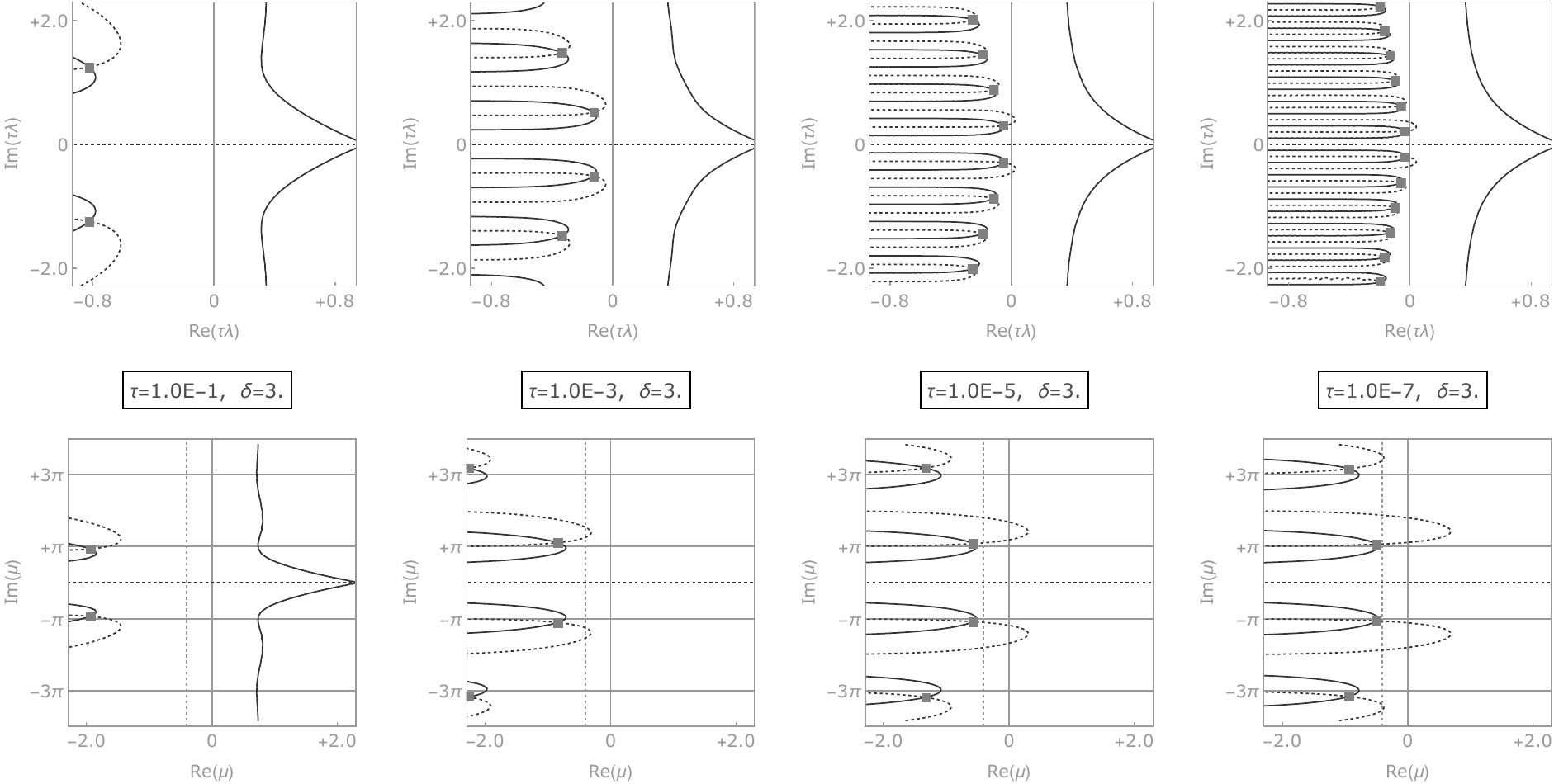}
}%
\caption{%
Recomputation of Figure \ref{Fig:SpectrumTW1} with modified value $\delta=3.0$. The eigenvalues near the complex origin now remain stable for $\tau\to0$ due to $\ln\at{2/\delta}<0$.
}%
\label{Fig:SpectrumTW2}%
\end{figure}%
\begin{proof}
From \eqref{Lem:TW:Eqn3} we deduce
\begin{align*}
\exp\at{\frac{1-\ka}{\ka\,\tau\abs\Om}\,2\,W}=\frac{2\,\at{1-\ka}^2+\at{1-\ka}\,\delta\,2\,W+\tau\abs{\Om}\delta}{\tau\abs{\Om}\delta}
\end{align*}
and \eqref{LinStab.Thm2} can  be written as
\begin{align*}
\exp\at{\frac{1-\ka}{\ka\,\tau\abs\Om}\,2\,W}
\,\exp\at{-\mu}=
-\frac{\at{\ka\,\tau\,\la+\ka}\,\at{\ka\,\tau\,\la+\ka-1}\,\at{\ka\,\tau\,\la+\ka-1-2\,W}-\ka\,\tau\abs{\Om}}{\ka\,\tau\abs{\Om}}\,.
\end{align*}
The assertions now follow immediately since \eqref{Lem:TW:Eqn4} implies that $W$ and $\eps$ scale for small $\tau$ like $\tau\,\ln\at{1/\tau}$ and $1/\ln\at{1/\tau}$, respectively.
\end{proof}
Lemma \ref{Lem:Unstable} guarantees the existence of the asymptotic eigenvalues 
\begin{align*}
\la\approx\frac{1-\ka}{\;\D\tau\,\ka\,\ln\at{\frac{1}{\tau}}\;}\,\Bat{\ln\bat{2/\delta}+\iu\,\bat{1+2\,\pi\,\Zset}}
\end{align*}
and this implies the following result.
\begin{corollary}
\label{Cor:Instab}
Let $\ka\in\oointerval{0}{1}$, $\Om\neq0$ and $0<\delta<2$ be fixed. Then the traveling wave from Theorem~\ref{Thm:TW} is unstable for all small $\tau$.
\end{corollary}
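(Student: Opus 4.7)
The plan is to exploit Lemma~\ref{Lem:Unstable} and carry out a perturbation analysis around the roots of the pointwise limit equation, then invert the rescaling \eqref{Cor:AsympSpectralScaling} to produce genuine eigenvalues of the linearized evolution with positive real part. Denoting the right-hand side of the transcendental equation in Lemma~\ref{Lem:Unstable} by $F_\tau(\mu)$, the spectral problem in the branch $S_+$ of Theorem~\ref{Thm:Spectrum} reads $\exp(\mu)=F_\tau(\mu)$, and Lemma~\ref{Lem:Unstable} asserts pointwise convergence $F_\tau(\mu)\to -2/\delta$ as $\tau\to0$. For $0<\delta<2$ the limit equation $\exp(\mu_*)=-2/\delta$ has a countable family of simple roots
\begin{align*}
\mu_*^{(k)}=\ln\at{2/\delta}+\at{2k+1}\,\iu\,\pi,\qquad k\in\Zset,
\end{align*}
all with real part $\ln\at{2/\delta}>0$, and I intend to perturb away from these roots.

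First I would upgrade the pointwise convergence $F_\tau\to -2/\delta$ to locally uniform convergence on $\Cset$. The numerator and denominator of $F_\tau$ are polynomials in $\mu$ whose coefficients involve the vanishing parameters $W$, $\tau\abs{\Om}$ and $\ka\,\eps$; in view of $W\to0$ and $\eps\to0$ (both forced by \eqref{Lem:TW:Eqn4}), the denominator converges to the nonzero constant $\ka\,\at{1-\ka}^2$ while the numerator converges to $2\,\at{1-\ka}^2$, and these convergences are uniform on compact sets of $\mu$. In particular the denominator stays bounded away from zero on any fixed compact, so no spurious poles of $F_\tau$ can obstruct the analysis.

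Next I would apply Rouch\'e's theorem to $G_\tau\at{\mu}:=\exp\at{\mu}-F_\tau\at{\mu}$ on small disks $D_k$ centred at $\mu_*^{(k)}$ on which the limit function $G_0\at{\mu}:=\exp\at{\mu}+2/\delta$ has a single simple zero and does not vanish on $\de D_k$. Uniform convergence $G_\tau\to G_0$ on $\overline{D_k}$ then yields, for all sufficiently small $\tau$, a unique root $\mu^{(k)}\at{\tau}\in D_k$ of $G_\tau$, and shrinking $D_k$ one can ensure $\mhRe\bat{\mu^{(k)}\at{\tau}}>\tfrac12\,\ln\at{2/\delta}>0$.

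Finally, I would undo the rescaling via $\la=\eps\,\mu/\tau$. Since \eqref{Lem:TW:Eqn4} gives $\eps\sim\at{1-\ka}/\bat{\ka\,\ln\at{1/\tau}}$ for small $\tau$, the resulting candidate eigenvalue satisfies
\begin{align*}
\la^{(k)}\at{\tau}=\frac{1-\ka}{\tau\,\ka\,\ln\at{1/\tau}}\,\mu^{(k)}\at{\tau}\,\bat{1+o\at{1}},
\end{align*}
whose real part is strictly positive for small $\tau$. To conclude, it must be verified that $\la^{(k)}\at{\tau}$ genuinely lies in $S_+$: since $\tau\,\la^{(k)}\at{\tau}=\eps\,\mu^{(k)}\at{\tau}\to0$, the condition $\mhRe\at{\tau\,\la}>-1$ holds automatically, and the exclusion $\tau\,\la\neq\at{1-\ka}/\ka$ is satisfied simply because $0\neq\at{1-\ka}/\ka$. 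By Theorem~\ref{Thm:Spectrum} this is then an unstable eigenvalue of the linearization in the comoving frame, so the traveling wave is unstable, which is the assertion. The main delicacy is the Rouch\'e estimate: one must confirm that the rational factor $F_\tau$ neither develops a zero nor a pole inside $D_k$ as $\tau\to0$, but this is precisely what uniform convergence of numerator and denominator to nonzero limits ensures.
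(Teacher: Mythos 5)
Your proposal is correct and is essentially the paper's own argument: the corollary is deduced directly from Lemma \ref{Lem:Unstable} by asserting the asymptotic eigenvalues $\la\approx\frac{1-\ka}{\tau\,\ka\,\ln\at{1/\tau}}\Bat{\ln\at{2/\delta}+\iu\,\pi\,\at{2\,\Zset+1}}$, which is exactly what you produce. Your Rouch\'e step (locally uniform convergence of the rational side to $-2/\delta$, hence roots near $\ln\at{2/\delta}+\at{2k+1}\,\iu\,\pi$) together with the verification that $\tau\,\la=\eps\,\mu\to0$ keeps these roots in $S_+$ merely makes precise the perturbation argument the paper leaves implicit.
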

However, our asymptotic analysis with respect to the $\mu$ variable does not describe the \mbox{complete} stability picture. First, in the case of $\delta<2$  it does not exclude the existence of unstable \mbox{eigenvalues} that correspond to large values of $\mu$. Secondly, numerical simulations of the dynamical problem \eqref{MicroDynamics}+\eqref{MicroMultiplier}+\eqref{DefTheta}+\eqref{DefNonl} as well as plots of $S_+$ as shown in Figures \ref{Fig:SpectrumTW1} reveal that traveling waves with $\delta<2$ can be stable as long as $\tau$ is not too small.
\appendix
%
%-------------------------------------------------------------------------------------------
\subsection*{List of symbols}
%-------------------------------------------------------------------------------------------
%
%
\begin{flushleft}
\scriptsize
\begin{tabular}{clclcl}
&$x\pair{t}{p}$&&state of the particle system &&Equation \eqref{MicroDynamics}
\\%
&$\ell\at{t}$&&prescribed dynamical constraint &&Equation \eqref{MicroConstraint}
\\%
&$\si\at{t}$&&Lagrian multiplier, non-local mean-field&&Equation \eqref{MicroMultiplier}
\\%
&$\tau$&&small relaxation time&&Equation \eqref{MicroDynamics}
\\%
&$\kappa$&&parameter for the trilinear function $H^\prime$, half width of spinodal region &&Equation \eqref{DefNonl} and Figure \ref{Fig:Trilinear}
\\%
&$\delta$&&strength of the \BMHC inhomogeneities \EMHC &&Equation \eqref{DefTheta}
\\%
&$\xi_-\at{t},\,\xi_+\at{t}$&&interface position in the particle model with $\tau>0$&&Equation \eqref{LimStates} and Figure \ref{Fig:GenericExamples2}
\\%
&$\xi\at{t}$&&interface in the limit model with $\tau=0$&&Equation \eqref{DefPhases}
and Figure \ref{Fig:LimitStates}
\medskip \\%
&$X$&&profile of a traveling wave&&Theorem \ref{Thm:TW}
\\%
&$\Om$&&speed of a traveling wave&&Theorem \ref{Thm:TW}
\\%
&$\Xi_\pm$&&interface positions in a traveling wave&&Theorem \ref{Thm:TW} and Figure \ref{Fig:TW}
\\%
&$P$&&analogue to $p$ in a comoving frame&&Equation \eqref{Lem:TW:Eqn0}
\\%
&$W$&&half of the interface width in a traveling wave&&Equations \eqref{Lem:TW:Eqn4} and \eqref{DefWidth}
\medskip \\%
&$Z$ &&eigenfunction of the linearized equation in the comoving frame&&Equations \eqref{LinStab.EP} and \eqref{EVTrafo}
\\%
& $\la$ && eigenvalue corresponding to $Z$, appears usually as $\tau\,\la$ && Equation \eqref{LinStab.EP}
\\%
&$\zeta$ &&mean-field in the spectral analysis&&Equation \eqref{LinStab.Nonlocal}
\\%
\end{tabular}
\end{flushleft}
%
%
%-------------------------------------------------------------------------------------------
\subsection*{Acknowledgements}
%-------------------------------------------------------------------------------------------
%
This work has been supported by the German Research Foundation (DFG) within the  Collaborative Research Center SFB 1060 and by the individual grant HE 6853/3-1.
%
%
% -----------------------------------------------------------------------------
% - Bibliography
% -----------------------------------------------------------------------------
%

%
%
\end{document}